\def\uvs{\bm{s}}
\def\uvt{\bm{t}}
\def\uvx{\bm{x}}
\def\uvy{\bm{y}}
\def\uvz{\bm{z}}
\def\uvtheta{\bm{\theta}}
\def\uvphi{\bm{\phi}}
\def\uvpsi{\bm{\psi}}
\DeclareMathOperator*{\argmax}{arg\,max}
\DeclareMathOperator*{\argmin}{arg\,min}
\DeclareMathOperator{\Conv}{Conv}
\newcommand{\norm}[1]{\left\lVert#1\right\rVert}
\def\floor#1{\left\lfloor{#1}\right\rfloor}
\def\GGiven{\,\bigg|\,}
\def\given{\,|\,}
\numberwithin{equation}{section}
\theoremstyle{plain}
\newtheorem{thm}{Theorem}[section]
\newtheorem{prop}[thm]{Proposition}
\theoremstyle{definition}
\newtheorem{defn}[thm]{Definition}
\newtheorem{example}[thm]{Example}
\theoremstyle{remark}
\newtheorem{remark}[thm]{Remark}
\title{Elements of asymptotic theory with outer probability measures}
\author[J. Houssineau]{Jeremie Houssineau}
\address{Department of Statistics, University of Warwick, Coventry, CV4 7AL, UK}
\email{jeremie.houssineau@warwick.ac.uk}
\author[N. K. Chada]{Neil K. Chada}
\address{Department of Statistics and Applied Probability, National University of Singapore, 119077, Singapore}
\email{neil.chada@nus.edu.sg}
\author[E. Delande]{Emmanuel Delande}
\address{Cockrell School of Engineering, University of Texas at Austin, Austin, 78712, USA}
\email{manu-delande@gmail.com}
\begin{document}

\maketitle

\begin{abstract}
Outer measures can be used for statistical inference in place of probability measures to bring flexibility in terms of model specification. The corresponding statistical procedures such as Bayesian inference, estimators or hypothesis testing need to be analysed in order to understand their behaviour, and motivate their use. In this article, we consider a class of outer measures based on the supremum of particular functions that we refer to as possibility functions. We then characterise the asymptotic behaviour of the corresponding Bayesian posterior uncertainties, from which the properties of the corresponding maximum a posteriori estimators can be deduced. These results are largely based on versions of both the law of large numbers and the central limit theorem that are adapted to possibility functions. Our motivation with outer measures is through the notion of uncertainty quantification, where verification of  these procedures is of crucial importance. These introduced concepts shed a new light on some standard concepts such as the Fisher information and sufficient statistics and naturally strengthen the link between the frequentist and Bayesian approaches.
\end{abstract}

\section{Formulation}
\label{sec:form}

The general objective of statistical inference is to find the true value of a parameter of interest given some observed data. The set of all possible parameters values is denoted $\Theta$. It is assumed that the observed data, denoted $y$, is the realisation of a random variable $Y$ on some observation space $\mathcal{Y}$ which is related to the parameter $\theta \in \Theta$ via a parametric family of probability distributions $\{p_{Y}(\cdot\,; \theta)\}_{\theta \in \Theta}$, often referred to as the likelihood. Different estimators for the true value of the parameter $\theta_0$ can then be considered, but the most common is the maximum likelihood estimator (MLE) \citep{AH99,JA97}, defined as
\begin{equation}
\label{eq:generalMle}
\hat{\theta} = \argmax_{\theta \in \Theta} p_Y(y; \theta).
\end{equation}
In the frequentist approach \citep{CB01,RAF22}, the uncertainty about the value of $\theta_0$ can be quantified via a confidence interval which is computed for a given confidence level. It is however important to note that it is the interval itself that is random (as a function of the observation $Y$) so that it is the interval that has a coverage probability, i.e.\ a probability of containing the true parameter, instead of the parameter having some probability to be contained in the interval. From a computational viewpoint, the MLE is often relatively easy to compute; however, confidence intervals are usually more difficult to deal with and offer a limited understanding of the uncertainty about the parameter. 

The principle of the Bayesian approach \citep{RC13} is to infer the posterior probability distribution of a random variable of interest given the observed data. Even if the quantity of interest is fixed, the uncertainty about this quantity is modelled as a random variable $X$ on some space $\mathcal{X}$ related to $\Theta$, e.g.\ $\mathcal{X} \supseteq \Theta$. One can then represent prior knowledge about $X$ as a probability distribution $p_X$ on $\mathcal{X}$ and then, through Bayes' formula, characterise the posterior distribution $p_{X|Y}(\cdot \given y)$ of $X$ as
\begin{equation}
\label{eq:Bayes}
p_{X|Y}(x \given y) = \frac{p_{Y|X}(y\given x) p_X(x)}{\int_{\mathcal{X}} p_{Y|X}(y\given z) p_X(z) \mathrm{d} z},
\end{equation}
for any $x \in \mathcal{X}$, where the likelihood is now a conditional probability distribution $p_{Y|X}(\cdot \given x)$ defined for any $x \in \mathcal{X}$. The integral in the denominator of \eqref{eq:Bayes} is called the marginal likelihood. In this context, the analogue of the MLE \eqref{eq:generalMle} is the maximum a posterior (MAP) estimate $\hat{x} = \argmax_{x \in \mathcal{X}} p_{X|Y}(x \given y)$. Bayesian inference is generally more computationally demanding than frequentist inference but offers a full characterisation of the posterior uncertainty. For a given confidence level, a credible interval can be calculated. In this interpretation, it is the random variable $X$ that is contained in the fixed credible interval with a certain probability. The marginal likelihood is also a useful quantity as it reflects the coherence between the observation and the considered modelling. It appears naturally in hierarchical models where it can be interpreted as a likelihood for the higher-order parameters. However, it is often difficult to compute and most of the techniques in computational statistics avoid the explicit calculation of this term.

In spite of the ever increasing available computational resources, the determination of the posterior probability distribution can be difficult to achieve due to the complexity of the model, to the amount of data or to physical constraints such as in real-time applications. Furthermore, discrepancies between the model and the actual mechanisms underlying real data can be at the origin of a range of issues, from the unreliability of the computed posterior distributions to the divergence of the considered algorithm.

\subsection{Proposed approach}
\label{sec:approach}

The purpose of this article is to propose an alternative representation of uncertainty that will lead to
\begin{enumerate*}[label=\arabic*)]
\item the strengthening of the connection between the frequentist and Bayesian approaches, with the objective of providing a more pragmatic computational framework, and to
\item the possibility of a less conservative modelling, with the objective of improving the overall robustness and decreasing the sensitivity to misspecification.
\end{enumerate*}

We follow the same premise as possibility theory \citep{DP15}, which can be motivated as follows. As in the frequentist approach, we consider that the unknown true parameter $\theta_0 \in \Theta$ of a statistical model is not random and, therefore, the uncertainty about this parameter might not be best modelled by a probability distribution on the set $\Theta$. When representing deterministic uncertainty with a probability measure $p$, the most stringent assumption is additivity. Indeed, if there is no available information regarding $\theta_0$ as an element of $\Theta$, one might not be able to define a meaningful credibility for the event $\theta_0 \in A$ for some $A \subseteq \Theta$ in a way that verifies $p(A) + p(A^{\mathrm{c}}) = 1$ with $A^{\mathrm{c}}$ the complement of $A$ in $\Theta$. Instead, it would be convenient to have a measure of credibility for which both $A$ and $A^{\mathrm{c}}$ could have a credibility equal to $1$ if there is no objection against these events. When removing the assumption of additivity from the concept of probability measure, one obtains the concept of outer measure. Since we still want the corresponding set function, say $\bar{P}$, to give value $1$ to the entire parameter set $\Theta$, we call $\bar{P}$ an \emph{outer probability measure} (o.p.m.). The corresponding type of uncertainty, which relates to non-random phenomena, is often referred to as \emph{epistemic} uncertainty \citep{GS61,PW91}. The two main differences between o.p.m.s and probability measures are that
\begin{enumerate*}[label=\arabic*)]
\item they are sub-additive, i.e.\ $\bar{P}(A \cup B) \leq \bar{P}(A) + \bar{P}(B)$ for any $A,B \subseteq \Theta$ and the equality does not have to hold even if $A$ and $B$ are disjoint, and 
\item they can be evaluated on all subsets, hence avoiding some measure-theoretic technicalities even though they have been mostly used as a measure-theoretic tool, e.g.\ for constructing the Lebesgue measure.
\end{enumerate*}
In particular, up to measurability conditions, a probability measure can be seen as a special case of an o.p.m.

In the case where there is no information about whether $\theta_0$ is in $A$ or not, one would like to set $\bar{P}(A) = \bar{P}(A^{\mathrm{c}}) = 1$. Seeing the value $p(A)$ that a probability measure $p$ gives to a subset $A$ as an integral of the corresponding p.d.f., also denoted by $p$, over the set $A$, i.e.\ $p(A) = \int_A p(\theta) \mathrm{d} \theta$, it is natural to seek an analogous expression for an o.p.m.\ $\bar{P}$ on $\Theta$. An operator allowing for $\bar{P}(A \cup A^{\mathrm{c}}) = \bar{P}(A) = \bar{P}(A^{\mathrm{c}}) = 1$ is the maximum, or, more generally the supremum. We therefore consider $\bar{P}(A) = \sup_{\theta \in A} f(\theta)$, with $f$ a suitable function. The assumptions on $\bar{P}$ yield the constraints $f \geq 0$ and $\sup_{\theta \in \Theta} f(\theta) = 1$. Although these functions are called possibility distributions in possibility theory, we refer to them as \emph{possibility functions} so as to better distinguish them from probability distributions.

In order to characterise the relations between different unknown quantities of interest, it is useful to introduce an analogue of the concept of random variable as follows: let $\Omega_{\mathrm{u}}$ be the sample space for deterministic but uncertain phenomena, then an \emph{uncertain variable}\footnote{The concept of uncertain variable as introduced by \citet{JH18} includes both random and deterministic forms of uncertainty; in that context, uncertain variables as introduced here would be called \emph{deterministic uncertain variables}} on $\Theta$ is a surjective mapping\footnote{Assuming that the mapping is surjective is not a limitation since $\Omega_{\mathrm{u}}$ can always be made large enough to satisfy it.} $\uvtheta$ from $\Omega_{\mathrm{u}}$ to $\Theta$. The main differences with a random variable are that the sample space is not equipped with a $\sigma$-algebra and a probability measure, and there is no measurability condition on the mapping; instead, there is a true ``state of nature'' $\omega_{\mathrm{u}}^* \in \Omega_{\mathrm{u}}$ which is such that $\uvtheta(\omega_{\mathrm{u}}^*)$ is the true value of the parameter. Yet, the concepts of realisation and event are meaningful for uncertain variables. Since possibility functions only model information, it follows that an uncertain variable does not induce a unique possibility function. Instead, different possibility functions represent different levels of knowledge about an uncertain variable. For this reason, we say that a possibility function \emph{describes} an uncertain variable. Analogous constructions have been considered in \citep{DMD99,DMD00}, where a connection between probability theory and control theory has been made in the context of $(\max,+)$ algebras \citep{PB10,VM92}, as well as in \citep{PT14}, where a law of large numbers is derived for a fuzzy-set-related notion of mean value \citep{CF01}. \citet{CD17} also considered extending statistical concepts to non-random variables in the context of causal inference.

The ultimate goal of the proposed approach is to allow for inference to be performed in the presence of both deterministic and random sources of uncertainty so that the different components of complex statistical models can be modelled as faithfully as possible. One way of achieving this goal is to consider more general o.p.m.s of the form
$$
\bar{P}_{\uvtheta,Y}(A \times B) = \sup_{\theta \in A} f_{\uvtheta}(\theta) \int_B p_{Y|\uvtheta}(y \given \theta) \mathrm{d} y,
$$
with $A$ a subset of some given set $\Theta$ and $B$ a measurable subset of $\mathcal{Y}$, where $\uvtheta$ represents the uncertainty about the true parameter $\theta_0$ in $\Theta$ and is described by a possibility function $f_{\uvtheta}$ on $\Theta$ and where $Y$ is a random variable on $\mathcal{Y}$ characterising the random components of the model with law $p_{Y|\uvtheta}(\cdot \given \uvtheta)$. These o.p.m.s are studied more formally in \citet{JH18} and their application in the context of Bayesian inference for some complex systems is considered in \citet{JH18b}. Applying Bayes' theorem to the o.p.m.\ $\bar{P}_{\uvtheta,Y}$ yields
\begin{equation}
\label{eq:likelihoodAsProbability}
f_{\uvtheta|Y}(\theta \given y) = \dfrac{p_{Y|\uvtheta}(y \given \theta) f_{\uvtheta}(\theta)}{\sup_{\psi \in \Theta} p_{Y|\uvtheta}(y \given \psi)f_{\uvtheta}(\psi)}.
\end{equation}
When the prior possibility function $f_{\uvtheta}$ is equal to the indicator function of $\Theta$, denoted $\bm{1}_{\Theta}$, which corresponds to the case where there is no information about $\uvtheta$, this posterior possibility function can be written as $f_{\uvtheta|Y}(\theta \given y) = p_{Y|\uvtheta}(y \given \theta)/p_{Y|\uvtheta}(y \given \hat{\theta})$ with $\hat{\theta}$ the MLE. The idea of considering quantities of the form \eqref{eq:likelihoodAsProbability} has been discussed in the literature, see for instance \citep{YYC95,WM99}; however, some of the theoretical foundations backing this approach are lacking.
%there is still a lack of theoretical guarantees about this approach.

\begin{remark}
The quantification of uncertainty in frequentist inference is directly related to the true value of the parameter but can only be interpreted across different realisations of the observation. Bayesian posterior probability distributions focus on one realisation of the observation but assume the parameter to be random. The posterior quantification of uncertainty provided by $f_{\uvtheta|Y}(\cdot \given y)$ focuses on one realisation of the observation \emph{and} on the true value of the parameter. An important consequence is that greater prior uncertainty would typically reduce the posterior probability of a given event in the standard Bayesian framework whereas the posterior credibility of the same event will tend to increase in the same situation when using possibility functions. This aspect is particularly important in decision theory \citep{GD04,JQS10}.
\end{remark}

In general, we might want to include prior information and consider an informative prior possibility function $f_{\uvtheta}$, that is $f_{\uvtheta}(\theta) < 1$ for some $\theta \in \Theta$. One way to obtain such possibility functions is to simply renormalise a bounded probability distribution, e.g.\ the normal possibility function with parameters $\mu \in \Theta \subseteq \mathbb{R}$ and $\sigma^2 > 0$ is defined as
\begin{equation}
\label{eq:normalPossibility}
\overline{\mathrm{N}}(\theta; \mu, \sigma^2) = \exp\Big( -\dfrac{1}{2\sigma^2} (\theta - \mu)^2 \Big).
\end{equation}
We will show in Section~\ref{sec:CLT} that $\mu$ and $\sigma^2$ can be rightfully referred to as the expected value and the variance respectively. For the sake of simplicity, we will write $\overline{\mathrm{N}}(\mu, \sigma^2)$ when referring to the function $\overline{\mathrm{N}}(\cdot\,; \mu, \sigma^2)$. It would be more natural to parametrise the normal possibility function by the precision $\tau = 1/\sigma^2$ since the case $\tau = 0$ is well-defined and corresponds to the uninformative possibility function equal to $1$ everywhere; yet, the parametrisation by the variance is considered for the sake of consistency with the probabilistic case.

It is important to extend widely-used probabilistic concepts to uncertain variables: consider two uncertain variables $\uvpsi$ and $\uvphi$ on the respective spaces $\Psi$ and $\Phi$ and assume that these uncertain variables are jointly described by the possibility function $f_{\uvpsi,\uvphi}$ on $\Psi \times \Phi$, then $\uvpsi$ and $\uvphi$ are said to be \emph{independently described} if there exists possibility functions $f_{\uvpsi}$ and $f_{\uvphi}$ such that $f_{\uvpsi,\uvphi}(\psi,\phi) = f_{\uvpsi}(\psi) f_{\uvphi}(\phi)$ for any $(\psi,\phi) \in \Psi \times \Phi$. This notion of independence models that the information we hold about $\uvpsi$ is not related to $\uvphi$ and conversely. Another connection with standard statistical techniques can be made via the \emph{profile likelihood} \citep{MV00} using the change of variable formula for possibility functions. Indeed, if $\uvtheta$ is an uncertain variable on $\Theta$ described by $f_{\uvtheta}$ then, for any mapping $\zeta : \Theta \to \Psi$, the uncertain variable $\uvpsi = \zeta(\uvtheta)$ can be described by
\begin{equation}
\label{eq:changeOfVariable}
f_{\uvpsi}(\psi) \doteq \sup \{ f_{\uvtheta}(\theta) : \theta \in \zeta^{-1}[\psi] \},
\end{equation}
for any $\psi \in \Psi$, where we can ensure that the inverse image $\zeta^{-1}[\cdot]$ is non-empty by assuming that $\zeta$ is surjective, otherwise the appropriate convention is $\sup \emptyset = 0$. Unlike the change of variable formula for p.d.f.s, \eqref{eq:changeOfVariable} does not contain a Jacobian term since $f_{\uvtheta}$ is not a density even when, say, $\Theta = \mathbb{R}$. In the case where $\Theta = \Psi \times \Phi$ and $\zeta(\psi, \phi) = \psi$, we find that $f_{\uvpsi}(\psi) \doteq \sup_{\phi \in \Phi} f_{\uvtheta}(\psi,\phi)$, which is the analogue of marginalisation \citep{LZ78}. This operation is often used when the number of parameters is too high and one wants to remove \emph{nuisance} parameters. The motivation behind profile likelihood is shown here to be consistent with the general treatment of possibility functions, as detailed below in Example~\ref{ex:profileLikelihood}. There are existing results on the identifiability analysis \citep{RKM09} and uncertainty analysis \citep{VTHR12} associated with profile likelihoods. If $\uvtheta$ and $\uvpsi$ are two uncertain variables on $\mathbb{R}^d$, jointly described by the possibility function $f_{\uvtheta,\uvpsi}$ then, using the change of variable formula \eqref{eq:changeOfVariable}, one can show that the uncertain variable $\alpha \uvtheta + \uvpsi$, for any scalar $\alpha \neq 0$, is described by
\begin{equation}
\label{eq:multScalar}
f_{\alpha \uvtheta + \uvpsi}(\phi) = \sup \big\{ f_{\uvtheta,\uvpsi}(\theta,\psi) : \theta, \psi \in \mathbb{R}^d,\, \alpha \theta + \psi = \phi \big\}.
\end{equation}
This formula is well known in possibility theory, see e.g.\ \citep{DP81}. From a computational viewpoint, the expression of the possibility function $f_{\alpha\uvtheta+\uvpsi}$ describing $\alpha\uvtheta+\uvpsi$ is simpler than in the probabilistic case. We can easily show via \eqref{eq:multScalar} that the normal possibility function shares some of the properties of its probabilistic analogue. Indeed, if $\uvtheta$ is an uncertain variable described by the possibility function $\overline{\mathrm{N}}(\mu, \sigma^2)$ then for any scalar $\alpha > 0$, the uncertain variable $\alpha \uvtheta$ is described by $\overline{\mathrm{N}}(\alpha \mu, (\alpha \sigma)^2)$.

The concept of uncertain variable is also useful in the context of hypothesis testing because the different hypotheses, say, $\mathrm{H}_0 : \uvtheta = \theta_0$ versus $\mathrm{H}_1 : \uvtheta \neq \theta_0$, corresponds to events which have non-zero credibility in general, e.g.\ the credibility of the event $\uvtheta = \theta_0$ when $\uvtheta$ is described by the possibility function $f_{\uvtheta}$ is $f_{\uvtheta}(\theta_0)$. Discussions about hypothesis testing can be found in Section~\ref{sec:LRT_unknown}.

This article aims to exploit further the principles introduced so far to define a likelihood for possibility functions, derive its usual asymptotic properties, and highlight the consequences of such modelling for estimation purposes. Therefore, we consider a conditional possibility function $f_{\uvy|\uvtheta}(\cdot\given \theta)$, $\theta \in \Theta$, which describes the observation process, and we model the uncertainty about the true value of the parameter by an uncertain variable $\uvtheta$ on $\Theta$. For a given possibility function $f_{\uvtheta}$ modelling the prior knowledge about $\uvtheta$, the associated posterior possibility function is
\begin{equation}
\label{eq:likelihoodAsPossibility}
f_{\uvtheta|\uvy}(\theta \given y) = \dfrac{f_{\uvy|\uvtheta}(y \given \theta) f_{\uvtheta}(\theta)}{\sup_{\psi \in \Theta} f_{\uvy|\uvtheta}(y\given \psi)f_{\uvtheta}(\psi)}.
\end{equation}
This form of conditioning follows from applying Bayes' rule in the context of possibility theory \citep{dBTM99}. The marginal likelihood
$$
f_{\uvy}(y) = \sup_{\psi \in \Theta} f_{\uvy}(y \given \psi)f_{\uvtheta}(\psi)
$$
is always a dimensionless scalar in the interval $[0,1]$ which can be easily interpreted as the degree of coherence between the model and the data. This is not the case when using the likelihood $p_{Y|\uvtheta}(\cdot \given \theta)$ in general. This advantage, however, does not come for free since a value of $f_{\uvy}(y)$ that is close to one does not imply that the considered model is a good model, it only implies that the observation $y$ is compatible with the model. For instance, if there is no information about the observation process, then one can set $f_{\uvy|\uvtheta}(\cdot \given \theta) = \bm{1}_{\mathcal{Y}}$, in which case any observation will receive the maximal marginal likelihood, i.e.\ $f_{\uvy}(y) = 1$ for any $y \in \mathcal{Y}$. Although this might make the implementation of some tasks like model selection more challenging, it could also make other tasks such as checking for prior-data conflicts \citep{EM06} more straightforward. With these additional notations, we can now come back to the topic of profile likelihood in the following example.

\begin{example}
\label{ex:profileLikelihood}
If there is no prior knowledge about $\Theta$, then one can set $f_{\uvtheta} = \bm{1}_{\Theta}$ so that the posterior possibility function simplifies to
$$
f_{\uvtheta|\uvy}( \theta \given y) = \dfrac{f_{\uvy|\uvtheta}(y \given \theta)}{\sup_{\theta' \in \Theta} f_{\uvy|\uvtheta}(y\given \theta')} \propto f_{\uvy|\uvtheta}(y \given \theta),
$$
which can be seen as an inversion of the conditioning between $\uvtheta$ and $\uvy$. If the uncertain variable $\uvtheta$ is of the form $(\uvpsi,\uvphi)$, with $\uvpsi$ and $\uvphi$ some uncertain variables on $\Psi$ and $\Phi$ respectively, and if we marginalise the variable $\uvphi$, then we find that the marginal posterior possibility function describing $\uvpsi$ is
$$
f_{\uvpsi|\uvy}(\psi \given y) = \sup_{\phi \in \Phi} \dfrac{f_{\uvy|\uvtheta}(y \given \psi, \phi)}{\sup_{\theta \in \Theta} f_{\uvy|\uvtheta}(y\given \theta)}.
$$
Inverting the conditioning once again, we obtain the likelihood for $\uvpsi$ only as
\begin{equation}
\label{eq:ex:profileLikelihood}
f_{\uvy|\uvpsi}(y \given \psi) \propto \sup_{\phi \in \Phi} f_{\uvy|\uvtheta}(y \given \psi, \phi),
\end{equation}
which justifies the maximisation over nuisance parameters in the profile likelihood. This also shows that care must be taken when considering the marginal likelihood %in this case
since \eqref{eq:ex:profileLikelihood} contains a normalising constant.
\end{example}

In terms of interpretation, we do not view the possibility function $f_{\uvy|\uvtheta}(\cdot \given \theta)$ as an upper bound for the true likelihood as suggested in \citep{DMP95}; instead, $f_{\uvy|\uvtheta}(\cdot \given \theta)$ simply relates some characteristics of the data generating process of interest, expressed via the parameter $\theta$, to the observation $y$. This aspect will be further developed in the following sections. In the situation where there are several data points $y_1,\dots,y_n$, we proceed consistently with the usual treatment and assume that these data points are realisations of uncertain variables $\uvy_1,\dots,\uvy_n$ that are conditionally-independently described by $f_{\uvy|\uvtheta}(\cdot \given \theta)$ given $\uvtheta = \theta$. It follows that the associated posterior possibility function takes the form
$$
f_{\uvtheta|\uvy_{1:n}}( \theta \given y_1,\dots,y_n) = \dfrac{\prod_{i=1}^n f_{\uvy|\uvtheta}(y_i \given \theta) f_{\uvtheta}(\theta)}{\sup_{\psi \in \Theta} \prod_{i=1}^n f_{\uvy|\uvtheta}(y_i\given \psi)f_{\uvtheta}(\psi)},
$$
where $\uvy_{1:n}$ stands for the sequence $(\uvy_1,\dots,\uvy_n)$. The practical motivation for considering a conditional possibility function as a likelihood comes from the need to derive equivalent tools for understanding the asymptotic behaviour of estimators when there is little knowledge about the true form of the distribution of the observations, as is often the case with real data. The analysis of statistical techniques expressed in this formalism will be conducted by simply relying on $f_{\uvy|\uvtheta}(\cdot \given \theta)$. 

We have considered two possible models for the likelihood: as a probability distribution \eqref{eq:likelihoodAsProbability} or as a possibility function \eqref{eq:likelihoodAsPossibility}. In fact any conditional o.p.m.\ could be used as a likelihood. Crucially, the nature of the likelihood has no bearing on the nature of the posterior uncertainty. Indeed, if the prior is a possibility function then so will be the posterior. In particular, this highlights that although the information contained in the prior might be forgotten as the number of observations increases, the nature of the prior as a probability distribution or as a possibility function is never forgotten. The nature of the prior should therefore be chosen at least as carefully as its shape.

The idea of replacing some or all of the probabilistic ingredients in statistical inference is not new. Fiducial inference as introduced by \citet{RAF35} is probably one of the first examples of such an approach. M-estimators \citep{VPG91} and quasi-likelihoods \citep{RWMW74} are also attempts at replacing the standard likelihood model. Alternatives to the standard approach can also be introduced using belief functions \citep{APD68,GS76,PS94}. Recently, \citet{BHW16} proposed the use of exponentiated loss functions as likelihoods in a Bayesian inference framework. The idea of using upper bounds in order to bring flexibility in the Bayesian approach is also common, for instance, the so-called provably approximately correct (PAC) Bayes method \citep{JL05} aims to minimise the upper bound for a given loss function. In spite of these connections, the proposed approach differs in many aspects from the existing literature and, to the best of the authors' knowledge, the asymptotic properties that are derived in this article are novel. We review some of the features of the generalised Bayesian inference framework proposed by \citet{BHW16} in the next section.

\subsection{Generalised Bayesian inference}

The main contribution of \citet{BHW16} is to motivate the use of a loss function $\mathcal{L}(\theta,y)$ to describe the relation between the data $Y$ sampled from a true distribution $p_0$ on a set $\mathcal{Y}$ and a parameter of interest $\theta$ in a set $\Theta$. It is then demonstrated by \citet{BHW16} from two different approaches that
$$
p(\theta \given y) = \dfrac{\exp(-\mathcal{L}(\theta,y))p(\theta)}{\int \exp(-\mathcal{L}(\theta',y))p(\theta') \mathrm{d} \theta'},
$$
is a consistent/optimal update rule for the subjective prior probability $p$ given a realisation $y$ of $Y$. In this, situation, the ``true'' parameter is argued to be the minimiser of the expected loss $\mathbb{E}(\mathcal{L}(\theta, Y))$. In order to calibrate the losses with respect to the prior distributions and to the data, it is suggested by \citet{BHW16} that an additional term $\log(p(\hat{\theta}))$ with $\hat{\theta} = \max_{\theta \in \Theta} p(\theta)$ can be used to ensure non-negativity and $\mathcal{L}$ can be standardised by assuming that $\min_{\theta \in \Theta} \mathcal{L}(\theta,y) = 0$ for any $y \in \mathcal{Y}$, leading to an overall loss of the form
$$
\bar{\mathcal{L}}(\theta; y, p) = \beta \mathcal{L}(\theta, y) + \log(p(\hat{\theta})/p(\theta)),
$$
with $\beta > 0$ an annealing-type inverse temperature controlling the influence of the data point $y$ on the posterior probability distribution.

The motivation behind this generalised Bayesian inference framework is very similar to the one underlying the introduction of possibility functions. In fact, the information provided by a loss function as well as the subjective beliefs held a priori are often of a non-stochastic nature. Therefore, assuming that possibility functions, and o.p.m.s in general, provide a suitable framework for manipulating information, it is sensible to use this framework together with the exponentiated loss functions studied by \citet{BHW16}. Moreover, exponentiating the reward $-\bar{\mathcal{L}}(\cdot\,; y, p)$ yields
$$
\exp\big(-\bar{\mathcal{L}}(\theta; y, p)\big) = \exp\big(-\mathcal{L}(\theta,y)\big)^{\beta} \dfrac{p(\theta)}{p(\hat{\theta})},
$$
where $\exp(-\mathcal{L}(\cdot,y))$ and $p(\cdot)/p(\hat{\theta})$ can be identified with the likelihood $f_{\uvy|\uvtheta}(y \given \cdot)$ and with the prior $f_{\uvtheta}$ respectively. It holds that $f_{\uvy|\uvtheta}(\cdot \given \theta)$ to the power $\beta$ remains a possibility function for any $\theta \in \Theta$ and it will appear in further sections that this operation preserves the expected value of the underlying uncertain variable while dividing its variance by $\beta$. The case of non-stochastic data is also mentioned by \citet{BHW16}, which furthers the connection with the proposed approach. Seeing the likelihood $f_{\uvy|\uvtheta}(y \given \cdot)$ as being related to a loss function will provide insights in the theory developed in further sections and will facilitate the interpretation of some of the results.

%%%%%%
\subsection{Structure and contributions}

There are a number of original contributions in this article and we highlight the most important ones here:
\begin{enumerate}[label=\arabic*), wide, nosep]
\item In the continuation of Section~\ref{sec:approach}, we introduce the analogues of a number of statistical concepts in Section~\ref{sec:notions} and highlight how some of these notions bring additional insight in modelling and inference.
\item A law of large numbers (LLN) and a central limit theorem (CLT) are derived for uncertain variables in Section~\ref{sec:CLT}. These results lead to the introduction of meaningful definitions of expected value and variance for uncertain variables and confirm the fundamental role of the normal possibility function in the proposed approach. In particular, the considered notion of expected value is related to the mode and, therefore, to the MAP in a Bayesian setting. The variance is also shown to be strongly related to the notion of Fisher information. These results allow for defining notions of identifiability and consistency.
\item Asymptotic properties of the posterior possibility function are studied Section~\ref{sec:MLE_un} using the available information only. This is crucial in applications since the true sampling distribution is unavailable. The connection between Bayesian inference and frequentists techniques such as likelihood ratio tests (LRTs) is then strengthened by showing that their asymptotic behaviour can be deduced from the Bernstein-von Mises (BvM) theorem.
\end{enumerate}

\begin{table}
\caption{Recurring notations}
\label{tbl:notations}
\renewcommand\arraystretch{1.25}
\begin{tabular}{r | l | c}
Notation & Description & Equation \\
\hline
$\Omega_{\mathrm{u}}$, $\bar{\mathbb{P}}$ & Sample space for uncertain variables and o.p.m. &  \\
$\uvx$, $\uvy$, $\uvtheta$ & Uncertain variables & \\
$\mathrm{N}(\mu, \sigma^2)$ & Normal distribution & \\
$\overline{\mathrm{N}}(\mu, \sigma^2)$ & Normal possibility function & \eqref{eq:normalPossibility} \\
$\mathbb{E}^*(\cdot)$, $\mathbb{V}^*(\cdot)$ & Expectation and variance w.r.t.\ $\bar{\mathbb{P}}$ & \eqref{eq:expectation}, \eqref{eq:variance}  \\
$\xrightarrow{o.p.m.}$ & Convergence in o.p.m. & \eqref{eq:convOpm} \\
$\mathcal{J}^*_n$ & Observed information based on $n$ observations & \eqref{eq:obsInfo} \\
$\mathcal{I}^*(\theta)$ & Fisher information based on $\mathbb{E}^*(\cdot \given \theta)$ & \eqref{eq:Fisher}
\end{tabular}
\end{table}

%%%%%%
\subsection{Notations}

Deriving elements of asymptotic theory in a new context requires the introduction of a number of notations. These new notations are described in Table~\ref{tbl:notations} together with their standard counterparts.
Unless stated otherwise, the considered sets $\mathcal{X}$, $\mathcal{Y}$, $\mathcal{Z}$ and $\Theta$ are all assumed to subsets of the real line. Partial derivatives w.r.t.\ $x$, $y$, $z$ and $\theta$ will be denoted by $\partial_x$, $\partial_y$, $\partial_z$ and $\partial_{\theta}$ respectively.

%%%%%%%%%%%%
\section{Statistical modelling with o.p.m.s}
\label{sec:notions}

Standard statistical inference uses a range of notions and techniques for the purpose of modelling, inference and analysis which are mostly based on probabilistic concepts. Introducing an alternative representation of uncertainty means that most, if not all, of these fundamental notions and techniques must be redefined. We show in this section that this can however be done easily without loosing the associated interpretations.

\paragraph{Location and scale parameters.}

We consider a likelihood function of the form $f_{\uvy|\uvtheta}(\cdot \given \theta)$. The parameter $\theta$ is said to be a \emph{location parameter} if there exists a possibility function $f_{\mathrm{loc}}$ on $\mathcal{Y}$ such that $f_{\uvy|\uvtheta}(y \given \theta) = f_{\mathrm{loc}}(y - \theta)$. Similarly, the parameter $\theta$ is said to be a \emph{scale} parameter if there exists a possibility function $f_{\mathrm{sca}}$ on $\mathcal{Y}$ such that $f_{\uvy|\uvtheta}(y \given \theta) = f_{\mathrm{sca}}(y/\theta)$. Rate and shape parameters can then be defined as usual. The possibility function $f_{\uvy|\uvtheta}(\cdot \given \theta)$ can be seen as providing an upper bound for subjective probability distributions related to $\uvy$ as follows: the probability measure $p( \cdot \given \theta)$ is upper-bounded set-wise by $f_{\uvy|\uvtheta}(\cdot \given \theta)$ if
$$
p(B \given \theta) \leq \sup_{y \in B} f_{\uvy|\uvtheta}(y \given \theta), \qquad B \subseteq \mathcal{Y}, \theta \in \Theta.
$$
In particular, if $f_{\uvy|\uvtheta}(\cdot \given \theta)$ is monotone increasing then this set-wise upper bound can be rephrased as $F(y \given \theta) \leq f_{\uvy|\uvtheta}(y \given \theta)$ for any $y \in \mathcal{Y}$ and any $\theta \in \Theta$ with $F(\cdot \given \theta)$ the cumulative distribution function (c.d.f.) induced by $p(\cdot \given \theta)$. This shows that possibility functions behave more closely to c.d.f.s than to p.d.f.s, as already hinted at by the definition of a scale parameter for possibility functions. 

\paragraph{Conjugate prior.}

The concept of conjugate prior can be extended straightforwardly to possibility functions whether the likelihood is a p.d.f.\ or a possibility function. In the former case, the renormalised version of any conjugate probability distribution is also a conjugate prior as a possibility function with the same parameters. For instance, the normal possibility function is conjugate for the expected value of a normal likelihood.\footnote{in fact, a Kalman filter can be derived as demonstrated in \citet{HB18}} Another example is the gamma possibility function on $[0,\infty)$ defined as
\begin{equation*}
\overline{\mathrm{G}}(\theta; \alpha, \beta) = \dfrac{(\beta\theta)^{\alpha}}{\alpha^{\alpha}} \exp\big( \alpha - \beta \theta \big),
\end{equation*}
with shape parameter $\alpha \geq 0$ and rate parameter $\beta \geq 0$, where $\alpha^\alpha$ is assumed to be equal to $1$ if $\alpha = 0$. Note that this is not simply the renormalized version of the gamma probability distribution: the interval of definition of the shape parameter has been shifted by $-1$ and the value $\beta = 0$ is now possible. The latter modification allows for considering an uninformative gamma prior possibility function when $\alpha = \beta = 0$. The gamma possibility function, like its probabilistic counterpart is a conjugate prior for the precision of a normal probability distribution. The inverse-gamma possibility function with shape parameter $\alpha \geq 0$ and scale parameter $\beta \geq 0$ is simply defined as $\overline{\mathrm{IG}}(\theta; \alpha, \beta) = \overline{\mathrm{G}}(1/\theta; \alpha, \beta)$ due to the simple form of the change of variable formula \eqref{eq:changeOfVariable}. Similarly, the beta possibility function is defined as
$$
\overline{\mathrm{B}}(\theta; \alpha, \beta) = \dfrac{(\alpha+\beta)^{\alpha+\beta}}{\alpha^{\alpha} \beta^{\beta}} \theta^{\alpha} (1-\theta)^{\beta},
$$
for any $\alpha, \beta \geq 0$. Once again the parameters have been shifted by $-1$ when compared to the usual parametrisation of the beta density function. This modification makes the value $\alpha = \beta = 0$ coincide with both the uninformative case and the interpretation of $\alpha$ successes and $\beta$ failures a priori.

\paragraph{Introducing possibility functions.} 

It is generally easy to introduce new possibility functions since the assumption that the supremum is equal to $1$ is much easier to verify than the same assumption with an integral. For instance, any function of the form
$$
\theta \mapsto \exp\bigg( -\Big\| \dfrac{\theta - \mu}{\beta} \Big\|^r\bigg),
$$
with scale parameter $\beta > 0$ and location parameter $\mu$ is a possibility function for any exponent $r > 0$ and any norm $\| \cdot \|$. Renormalising a probability distribution often provides a meaningful candidate for a possibility function, though it is not always the case when the distribution is not a conjugate prior.  A mechanism leading to the definition of a new probability distribution of interest can also inspire the introduction of a new possibility function. Applying this principle to the (non-central) chi-squared distribution leads to the computation of the possibility function describing the sum of squared normally-described uncertain variables as in the following proposition.

\begin{prop}
\label{res:chiSquared}
Let $\uvtheta_1, \dots, \uvtheta_n$ be a collection of uncertain variables independently described by the respective possibility functions $\overline{\mathrm{N}}(\mu_i, \sigma^2)$, $i \in \{1,\dots,n\}$, for some parameters $\mu_i$ and $\sigma^2 > 0$. Then the uncertain variable $\uvpsi = \sum_{i=1}^n \uvtheta^2_i$ is described by the possibility function
\begin{equation}
\label{eq:chiSquaredPossibility}
\overline{\chi}^2(\psi; \bar{\mu},\beta) = \exp\Big(-\dfrac{1}{\beta} (\sqrt{\psi} - \sqrt{\bar{\mu}} )^2 \Big), \qquad \psi \geq 0,
\end{equation}
where $\bar{\mu} = \sum_{i=1}^n \mu_i^2$ and $\beta = 2\sigma^2$.
\end{prop}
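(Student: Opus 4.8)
The plan is to apply the multivariate version of the change of variable formula \eqref{eq:changeOfVariable} to the map $\zeta : \mathbb{R}^n \to [0,\infty)$ given by $\zeta(\theta_1, \dots, \theta_n) = \sum_{i=1}^n \theta_i^2$, and then to reduce the resulting supremum to an elementary constrained optimisation problem on a sphere.

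First I would use the independence assumption to write the joint possibility function describing $(\uvtheta_1, \dots, \uvtheta_n)$ as
$$f_{\uvtheta_1, \dots, \uvtheta_n}(\theta_1, \dots, \theta_n) = \prod_{i=1}^n \exp\Big(-\frac{1}{2\sigma^2}(\theta_i - \mu_i)^2\Big) = \exp\Big(-\frac{1}{2\sigma^2}\norm{\bm{\theta} - \bm{\mu}}^2\Big),$$
with $\bm{\theta} = (\theta_1, \dots, \theta_n)$ and $\bm{\mu} = (\mu_1, \dots, \mu_n)$. Since $\zeta^{-1}[\psi] = \{\bm{\theta} \in \mathbb{R}^n : \norm{\bm{\theta}}^2 = \psi\}$ is non-empty precisely when $\psi \geq 0$, formula \eqref{eq:changeOfVariable} gives, for $\psi \geq 0$,
$$f_{\uvpsi}(\psi) = \sup_{\norm{\bm{\theta}}^2 = \psi} \exp\Big(-\frac{1}{2\sigma^2}\norm{\bm{\theta} - \bm{\mu}}^2\Big) = \exp\Big(-\frac{1}{2\sigma^2} \inf_{\norm{\bm{\theta}}^2 = \psi} \norm{\bm{\theta} - \bm{\mu}}^2\Big),$$
so the problem reduces to computing the squared distance from $\bm{\mu}$ to the sphere of radius $\sqrt{\psi}$ centred at the origin.

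For the optimisation step I would expand $\norm{\bm{\theta} - \bm{\mu}}^2 = \norm{\bm{\theta}}^2 - 2\langle \bm{\theta}, \bm{\mu}\rangle + \norm{\bm{\mu}}^2 = \psi - 2\langle\bm{\theta},\bm{\mu}\rangle + \bar{\mu}$ on the constraint set, and apply Cauchy--Schwarz, $\langle \bm{\theta}, \bm{\mu}\rangle \leq \norm{\bm{\theta}}\,\norm{\bm{\mu}} = \sqrt{\psi}\sqrt{\bar{\mu}}$, to get $\norm{\bm{\theta} - \bm{\mu}}^2 \geq (\sqrt{\psi} - \sqrt{\bar{\mu}})^2$. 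This bound is attained: when $\bm{\mu} \neq 0$ by taking $\bm{\theta} = \sqrt{\psi}\,\bm{\mu}/\norm{\bm{\mu}}$, and when $\bm{\mu} = 0$ by any $\bm{\theta}$ on the sphere (both sides then equal $\psi$). Substituting back yields $f_{\uvpsi}(\psi) = \exp\big(-(\sqrt{\psi} - \sqrt{\bar{\mu}})^2/(2\sigma^2)\big)$, which is \eqref{eq:chiSquaredPossibility} with $\bar{\mu} = \sum_i \mu_i^2$ and $\beta = 2\sigma^2$; one also checks that this is a genuine possibility function, the value $1$ being attained at $\psi = \bar{\mu}$.

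The only subtlety I anticipate is making the multivariate reading of \eqref{eq:changeOfVariable} explicit and treating the degenerate case $\bm{\mu} = 0$ (where the minimiser is not unique) together with the boundary point $\psi = 0$; beyond that the argument is a one-line Cauchy--Schwarz estimate, so I do not expect a genuine obstacle.
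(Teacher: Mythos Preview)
Your proof is correct and follows the same overall strategy as the paper: both reduce the change-of-variable supremum to the constrained optimisation $\inf_{\|\bm\theta\|^2=\psi}\|\bm\theta-\bm\mu\|^2$ on the sphere. The only difference is in how that sub-problem is solved: the paper uses Lagrange multipliers to obtain the optimiser $\theta_i=\mu_i\sqrt{\psi}/\sqrt{\bar\mu}$, whereas you use Cauchy--Schwarz to get the value directly; your route is slightly cleaner in that it automatically covers the degenerate case $\bm\mu=0$, which the paper's formula for the optimiser leaves implicit.
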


The proof of Proposition~\ref{res:chiSquared} can be found in the appendix, together with the proofs of all the other results in this article. As opposed to the chi-squared distribution, $\overline{\chi}^2(\bar{\mu},\beta)$ does not depend directly on the number of terms $n$ in the considered sum of squared normal uncertain variables. The analogue of the standard chi-squared distribution can be recovered when $\bar{\mu} = 0$ and yields the possibility function $\theta \in [0,\infty) \mapsto \exp(-\theta/\beta)$ which can also be identified as the renormalised version of the exponential distribution. It is easy to prove that if $\uvtheta$ is an uncertain variable described by $\overline{\chi}^2(\mu, \beta)$ for some parameters $\mu$ and $\beta > 0$ then, for any constant $\alpha > 0$, the uncertain variable $\alpha \uvtheta$ is described by $\overline{\chi}^2(\alpha\mu, \alpha\beta)$.

\paragraph{Sufficient statistics.}

The notion of sufficient statistics can be defined for uncertain variables as follows: a statistics $\uvt = T(\uvy)$ is sufficient for a model with observation $\uvy$, parameter $\uvtheta$ and likelihood $f_{\uvy|\uvtheta}(\cdot \given \theta)$ if it holds that $f_{\uvy|\uvt,\uvtheta}(y \given \uvt, \uvtheta) = f_{\uvy|\uvt}(y \given \uvt)$, i.e.\ the observation $\uvy$ is conditionally independent of the parameter $\uvtheta$ given $\uvt$. The associated factorisation is
\begin{equation}
\label{eq:sufficientStatistics}
f_{\uvy|\uvtheta}(y \given \theta) = f_{\uvy|\uvt}(y \given t) f_{\uvt|\uvtheta}(t \given \theta), \qquad y \in \mathcal{Y}, t = T(y),
\end{equation}
for some conditional possibility functions $f_{\uvy|\uvt}$ and $f_{\uvt|\uvtheta}$. Indeed, since $\uvt$ is a deterministic transformation of $\uvy$, it holds that $f_{\uvy|\uvtheta}(y \given \theta) = f_{\uvy,\uvt|\uvtheta}(y, T(y) \given \theta)$ which can then be expressed as in \eqref{eq:sufficientStatistics} based on the definition of the notion of sufficient statistics. The same operation is trickier in a probabilistic context since the joint distribution of the observation and the sufficient statistics does not admit a density in general. We emphasise that most statistical tests would be invariant under a change of sufficient statistics in this context since the change of variable formula does not include a Jacobian term and since the decomposition \eqref{eq:sufficientStatistics} involves two possibility functions instead of arbitrary functions as in the probabilistic case.

\begin{example}
\label{ex:normalNormalcase}
We consider observations $\uvy_1,\uvy_2,\dots$ that are independently and identically described by a normal possibility function $f_{\uvy|\uvtheta}(\cdot \given \theta) = \overline{\mathrm{N}}(\theta, \sigma^2)$, conditionally on $\uvtheta = \theta$. The uncertain variable $\uvs_n = n^{-1}\sum_{i=1}^n \uvy_i$ corresponding to the sample average can be referred to as a sufficient statistics since the possibility function $f_{\uvy_{1:n}|\uvtheta}(\cdot \given \theta)$ jointly describing $\uvy_1,\dots,\uvy_n$ given $\uvtheta = \theta$ verifies
\begin{equation*}
f_{\uvy_{1:n}|\uvtheta}(y_{1:n} \given \theta) = \prod_{i=1}^n f_{\uvy|\uvtheta}(y_i \given \theta) = \exp\Big( -\dfrac{1}{2\sigma^2} \sum_{i=1}^n (y_i - s_n)^2 \Big) \exp\Big( -\dfrac{n}{2\sigma^2} (\theta - s_n)^2 \Big),
\end{equation*}
with the first term in the product not depending on $\theta$ and the second only depending on the realisation $s_n$ of the sufficient statistics $\uvs_n$ instead of all observations $y_1,\dots,y_n$. Therefore, if the sole objective is to estimate $\uvtheta$, then marginalising over all sequences of observations $y_{1:n}$ that have sample average $s_n$ makes sense, i.e.\ we consider the likelihood
\begin{align*}
f_{\uvs_n|\uvtheta}( s \given \theta ) & = \sup \bigg\{ f_{\uvy_{1:n}|\uvtheta}(y_{1:n} \given \theta) : y_1,\dots,y_n \in \mathcal{Y}, \frac{1}{n}\sum_{i=1}^n y_i = s \bigg\} \\
& = \exp\Big( -\dfrac{n}{2\sigma^2} (\theta - s)^2 \Big),
\end{align*}
which is indeed a conditional possibility function.
\end{example}

%%%%
\section{Expected value and variance of uncertain variables}
\label{sec:CLT}

\subsection{LLN and CLT}

In this section we aim to derive a LLN and a CLT for uncertain variables. These results will prove to be useful for our analysis later in the article. This is especially the case in Section~\ref{sec:MLE_un}, where we discuss various asymptotic properties of statistical procedures in the situation where the true sampling distribution is not accessible for analysis. The proofs of the theorems in this section can be found in the appendix.
%Appendix~\ref{sec:proofsLLN}.
The more general notations $\uvx$ and $\uvz$ are used for uncertain variables to emphasise that the derived results can be useful in different contexts.

We wish to study sums based on a sequence $\uvx_1,\uvx_2,\dots$ of independently described uncertain variables on $\mathbb{R}^d$ with possibility function $f_{\uvx}$. Related results have been derived by \citet{MM99}, \cite{MM05} and \citet{PT14}, but with different representations of uncertainty or with different notions of expected value. It follows from \eqref{eq:multScalar} that the possibility function $f_{\uvs_n}$ describing the uncertain variable $\uvs_n = n^{-1}\sum_{i=1}^n \uvx_n$ takes the form
$$
f_{\uvs_n}(x) = \sup \bigg\{ \prod_{i=1}^n f_{\uvx}(x_i) : x_1,\dots,x_n \in \mathbb{R}^d,\, \frac{x_1 + \dots + x_n}{n} = x \bigg\},
$$
for any $x \in \mathbb{R}^d$. We can then obtain an analogue of the law of large numbers as follows.

\begin{thm}
\label{thm:LLN}
If $\uvx_1,\uvx_2,\dots$ is a sequence of independent uncertain variables on $\mathbb{R}^d$ with possibility function $f_{\uvx}$ such that 
\begin{enumerate}[label=(\roman*),nosep]
\item \label{it:continuous} $f_{\uvx}$ is continuous on $\mathbb{R}^d$,
\item $f_{\uvx}$ is a twice continuously differentiable function on an open neighbourhood of each point in $\argmax f_{\uvx}$ and
\item \label{it:vanishes} $\displaystyle{\lim_{\norm{x} \to \infty}} f_{\uvx}(x) = 0$, 
\end{enumerate}
then the possibility function $f_{\uvs_n}$ describing the uncertain variable $\uvs_n = n^{-1}\sum_{i=1}^n \uvx_i$ verifies
$$
\lim_{n \to \infty} f_{\uvs_n} = \bm{1}_{\Conv(\argmax f_{\uvx})},
$$
where the limit is point-wise and where $\Conv(S)$ is the convex hull of a set $S \subseteq \mathbb{R}^d$.
\end{thm}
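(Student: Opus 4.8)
The plan is to prove the two inclusions separately: $f_{\uvs_n}(x)\to 1$ for $x\in\Conv(\argmax f_{\uvx})$ and $f_{\uvs_n}(x)\to 0$ for $x\notin\Conv(\argmax f_{\uvx})$, starting from the formula for $f_{\uvs_n}$ displayed just before the statement,
$$
f_{\uvs_n}(x)=\sup\Big\{\prod_{i=1}^n f_{\uvx}(x_i): x_1,\dots,x_n\in\mathbb{R}^d,\ \tfrac1n\sum_{i=1}^n x_i=x\Big\},
$$
which in particular gives $0\le f_{\uvs_n}\le 1$ everywhere. Write $M:=\argmax f_{\uvx}$; since $f_{\uvx}$ is a possibility function which is continuous and vanishes at infinity by \ref{it:continuous} and \ref{it:vanishes}, the value $1$ is attained, $M=\{x:f_{\uvx}(x)=1\}$ is non-empty and compact, and hence $\Conv(M)$ is compact and convex.

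For the lower bound I would fix $x\in\Conv(M)$ and use Carathéodory's theorem to write $x=\sum_{j=0}^d\lambda_j m_j$ with $m_j\in M$, $\lambda_j\ge 0$, $\sum_j\lambda_j=1$, relabelled so that $\lambda_0\ge 1/(d+1)$. For each $n$, take $\floor{n\lambda_j}$ of the $n$ summands equal to the maximiser $m_j$ for $j\ge 1$ and the remaining $k_0:=n-\sum_{j\ge1}\floor{n\lambda_j}$ summands equal to a single point $y_0$ chosen to make the average exactly $x$; a direct computation gives $y_0=m_0+v/k_0$ with $\norm{v}$ bounded by a constant depending only on $m_0,\dots,m_d$ and $d$, while $k_0\ge n\lambda_0\ge n/(d+1)$, so $\norm{y_0-m_0}=O(1/n)$. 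For $n$ large, $y_0$ lies in the neighbourhood of $m_0$ on which $f_{\uvx}$ is twice continuously differentiable, and since $m_0$ is an interior maximiser we have $\nabla f_{\uvx}(m_0)=0$; Taylor's theorem then yields $f_{\uvx}(y_0)\ge 1-C\norm{y_0-m_0}^2\ge 1-C'n^{-2}$. The product of the $n$ values equals $f_{\uvx}(y_0)^{k_0}\ge(1-C'n^{-2})^n\to 1$, so $f_{\uvs_n}(x)\to1$.

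For the upper bound I would fix $x\notin\Conv(M)$ and strictly separate: choose a unit vector $u$ and a real $\beta$ with $\langle u,x\rangle>\beta>\sup_{m\in M}\langle u,m\rangle$, which is possible since $\Conv(M)$ is compact and convex. I then reduce to one dimension via $h(t):=\sup\{f_{\uvx}(y):\langle u,y\rangle=t\}$; using \ref{it:continuous} and \ref{it:vanishes} one checks that $h$ is a possibility function on $\mathbb{R}$, continuous, with $h(t)\to 0$ as $|t|\to\infty$ and $\{h=1\}=\langle u,M\rangle$, a compact subset of $(-\infty,\beta)$. Since $f_{\uvx}(x_i)\le h(\langle u,x_i\rangle)$, one gets $f_{\uvs_n}(x)\le\sup\{\prod_{i=1}^n h(t_i):\tfrac1n\sum_i t_i=t\}$ with $t:=\langle u,x\rangle$, and fixing $\tau$ with $\sup_{m\in M}\langle u,m\rangle<\tau<t$ it remains to bound this supremum. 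For any admissible tuple $(t_i)$, the total excess $\sum_{i:\,t_i\ge\tau}(t_i-\tau)$ equals $n(t-\tau)$ plus the total deficit $\sum_{i:\,t_i<\tau}(\tau-t_i)\ge 0$, hence is at least $n(t-\tau)>0$. If at least $\sqrt n$ of the indices satisfy $t_i\ge\tau$, then $\prod_i h(t_i)\le\big(\sup_{s\ge\tau}h(s)\big)^{\sqrt n}$ with base $<1$, because $[\tau,\infty)$ misses $\{h=1\}$ and $h$ vanishes at infinity; otherwise some $t_{i_0}$ carries excess at least $n(t-\tau)/\sqrt n$, so $t_{i_0}\ge\tau+\sqrt n\,(t-\tau)$ and $h(t_{i_0})\le\sup_{s\ge\tau+\sqrt n(t-\tau)}h(s)$, which tends to $0$ by \ref{it:vanishes}. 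Either way the product is at most a quantity tending to $0$ uniformly in the tuple, so $f_{\uvs_n}(x)\to0$, completing the proof.

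The hard part is the upper bound. A naive attempt would bound $\prod_i h(t_i)$ by $\exp(n\,\widehat g(t))$ with $\widehat g$ the concave envelope of $\log h$, but $\widehat g$ can vanish at points outside $\Conv(M)$ as soon as $h$ decays more slowly than any exponential, so this relaxation is not tight enough; the argument above instead exploits that the sample average is the mean of exactly $n$ atoms of equal mass $1/n$, forcing either many atoms into a region where $h<1$ or a single atom so far out that \ref{it:vanishes} makes $h$ small there. The remaining work is routine: the continuity and compactness claims about $h$, the explicit form of $y_0$ and the bound on $\norm{v}$ in the lower-bound construction, and verifying that the twice-differentiability hypothesis is genuinely what upgrades $\liminf f_{\uvs_n}(x)>0$ to $f_{\uvs_n}(x)\to1$.
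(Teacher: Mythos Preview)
Your proof is correct and follows the same two-case skeleton as the paper's: Carath\'eodory plus a second-order Taylor expansion at a maximiser for $x\in\Conv(M)$, and a $\sqrt{n}$ dichotomy (either $\gtrsim\sqrt{n}$ points lie in a region where $f<1$, or one point is pushed to distance $\gtrsim\sqrt{n}$ where $f\to 0$) for $x\notin\Conv(M)$.

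The implementation differs in two places, and in both your version is a little cleaner. For the lower bound, the paper perturbs \emph{every} block by scaling each $a_i$ to $\tfrac{c_in}{\lfloor c_in\rfloor}a_i$ and then Taylor-expands at each $a_i$; you instead park all blocks but one exactly at their maximisers and absorb the rounding error into a single point $y_0$ near $m_0$, so that only one Taylor expansion is needed. For the upper bound, the paper works directly in $\mathbb{R}^d$ with the distance function $d_{\mathcal{C}_f}$ and the annuli $B_n=\{x:\delta/2\le d_{\mathcal{C}_f}(x)\le\delta(1+\sqrt{n})\}$, while you first separate $x$ from $\Conv(M)$ by a hyperplane and reduce to the one-dimensional problem for $h(t)=\sup\{f(y):\langle u,y\rangle=t\}$; the pigeonhole/excess argument then runs on the line. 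Both routes use exactly the same balance between ``many moderate deviations'' and ``one large deviation''; your projection step buys a simpler geometry at the cost of the auxiliary verification that $h$ is continuous with $h(t)\to 0$, which does follow from \ref{it:continuous} and \ref{it:vanishes} via compactness of the super-level sets of $f$.
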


Theorem~\ref{thm:LLN} emphasises the importance of the point(s) where $f_{\uvx}$ is maximised. In the limit, only a number $N(n)$ of uncertain variables in the collection $\uvx_1,\uvx_2,\dots$ can have realisations that are not elements of the $\argmax$ of $f_{\uvx}$, with $N(n)$ verifying $\lim_{n \to \infty} n^{-1}N(n) = 0$. It follows that the sample average is in the convex hull of the $\argmax$. 

Theorem~\ref{thm:LLN} also shows that the $\argmax$ of $f_{\uvx}$, when it is a singleton, plays the role of the expected value in the standard formulation of this result. This suggests a definition of the notion of expectation for uncertain variables. First, a few steps are needed in order to lay some formal basis for such a definition: we consider an uncertain variable $\uvx : \Omega_{\mathrm{u}} \to \mathcal{X}$ and define the o.p.m.\ $\bar{\mathbb{P}}$ on $\Omega_{\mathrm{u}}$ as
\begin{equation*}
\bar{\mathbb{P}}(A) = \sup_{\omega \in A} f_{\uvx}(\uvx(\omega)),
\end{equation*}
for any $A \subseteq \Omega_{\mathrm{u}}$. If the only available information about outcomes and events in $\Omega_{\mathrm{u}}$ comes from the possibility function $f_{\uvx}$ via $\uvx$, then $\bar{\mathbb{P}}$ also contains all that information, which could be used to deduce what is known about any other uncertain variable on $\Omega_{\mathrm{u}}$. The credibility of any event $\uvx \in B$ for some $B \subseteq \mathcal{X}$ can then be measured as $\bar{\mathbb{P}}(\uvx \in B)$ by the standard identification between that event and the subset $\{\omega \in \Omega_{\mathrm{u}} : \uvx(\omega) \in B\}$. In particular, the event $A \subseteq \Omega_{\mathrm{u}}$ is said to happen almost surely under~$\bar{\mathbb{P}}$ if it holds that $\bar{\mathbb{P}}(A^{\mathrm{c}}) = 0$. If $A$ is a singleton $\{\omega_{\mathrm{u}}\}$ then $\bar{\mathbb{P}}(A)$ is simply written $\bar{\mathbb{P}}(\omega_{\mathrm{u}})$. We can now introduce a notion of expectation for uncertain variables as follows.

\begin{defn}
\label{def:expectation}
Given an o.p.m.\ $\bar{\mathbb{P}}$ on a set $\Omega_{\mathrm{u}}$, the expectation of an uncertain variable $\uvx : \Omega_{\mathrm{u}} \to \mathcal{X}$ is defined as\footnote{The notation $\mathbb{E}^*$ is sometimes used to refer to the concept of \emph{outer expectation} which is unrelated to the definition considered here.}
\begin{equation}
\label{eq:expectation}
\mathbb{E}^*(\uvx) = \uvx\Big( \argmax_{\omega \in \Omega_{\mathrm{u}}} \bar{\mathbb{P}}(\omega) \Big).
\end{equation}
\end{defn}

This definition of the expectation does not require assumptions on the space on which the uncertain variable is defined; however, it is important to note that $\mathbb{E}^*(\uvx)$ is set-valued in general. It is consistent with the law of large numbers since
\begin{equation*}
\argmax_{x \in \mathcal{X}} f_{\uvx}(x) = \argmax_{x \in \mathcal{X}} \bar{\mathbb{P}}\big(\uvx^{-1}[\{x\}]\big) = \mathbb{E}^*(\uvx),
\end{equation*}
with $f_{\uvx}$ the possibility function describing $\uvx$; which follows from Proposition~\ref{res:surjectiveTransformArgmax}.

This notion of expectation also displays some useful properties. For instance, if $T$ is a map on $\mathcal{X}$, then it follows from Definition~\ref{def:expectation} that $\mathbb{E}^*(T(\uvx)) = T\big( \mathbb{E}^*(\uvx) \big)$. Unsurprisingly, this invariance property is shared with the MLE which is also based on an $\argmax$. If it holds that $\uvx = (\uvx_1,\uvx_2)$ with $\uvx_1$ and $\uvx_2$ two uncertain variables, then it is easy to prove that $\mathbb{E}^*(\uvx) = (\mathbb{E}^*(\uvx_1), \mathbb{E}^*(\uvx_2))$. By considering the function $T(x_1,x_2) = \alpha x_1 + \beta x_2$ for some $\alpha,\beta \in \mathbb{R}$, it follows that
\begin{equation*}
\mathbb{E}^*(\alpha \uvx_1 + \beta \uvx_2) = \alpha \mathbb{E}^*(\uvx_1) + \beta \mathbb{E}^*(\uvx_2),
\end{equation*}
that is, $\mathbb{E}^*(\cdot)$ is linear. Conditional expectations can also be defined easily, see the appendix for more details.

Given the result of Theorem~\ref{thm:LLN}, it is natural to also consider the $\argmax$ as the starting point of the CLT for uncertain variables. Before stating the theorem, we introduce a slightly more general way of defining the o.p.m.\ $\bar{\mathbb{P}}$ underlying a sequence $\bar{\uvx} = (\uvx_1,\uvx_2,\dots)$ of uncertain variables on $\Omega_{\mathrm{u}}$ as\footnote{A more formal definition would require the introduction of cylinder sets and the corresponding finite-dimensional o.p.m.s.}
\begin{equation*}
\bar{\mathbb{P}}(A) = \sup_{\omega \in A} f_{\bar{\uvx}}(\uvx_1(\omega), \uvx_2(\omega), \dots),
\end{equation*}
for any $A \subseteq \Omega_{\mathrm{u}}$, where $f_{\bar{\uvx}}$ is the possibility function describing all the uncertain variables in the sequence $\bar{\uvx}$ jointly. Henceforth, when some collection of uncertain variables will be defined, the o.p.m.\ $\bar{\mathbb{P}}$ and the corresponding expectation $\mathbb{E}^*(\cdot)$ will be implicitly assumed to be induced by the possibility function jointly describing that collection. This situation also occurs in the Bayesian interpretation of probability where probability distributions represent the state of knowledge rather than some intrinsically random phenomenon whose distribution is induced by the underlying probability space.

\begin{thm}
\label{thm:CLT}
If $\uvx, \uvx_1,\uvx_2,\dots$ is a sequence of uncertain variables on $\mathbb{R}$ independently and identically described by a possibility function $f_{\uvx}$ verifying
\begin{enumerate}[label=(\roman*), nosep]
\item \label{it:logConcave} $f_{\uvx}$ is strictly log-concave and
\item \label{it:diffAtMode} $f_{\uvx}$ is twice differentiable,
\end{enumerate}
then the expected value $\mathbb{E}^*(\uvx)$ of the uncertain variable $\uvx$ is a singleton $\{\mu\}$ and the possibility function $f_{\uvt_n}$ describing the uncertain variable $\uvt_n = n^{-1/2}\sum_{i=1}^n (\uvx_i-\mu)$ verifies
$$
\lim_{n \to \infty} f_{\uvt_n}(t) =
\begin{cases*}
\overline{\mathrm{N}}\big(t; 0, \sigma^2 \big) & if $f_{\uvx}''(\mu) \neq 0$ \\
1 & otherwise,
\end{cases*}
$$
for any $t \in \mathbb{R}$, with $1/\sigma^2 = -f_{\uvx}''(\mu)$.
\end{thm}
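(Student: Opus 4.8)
The plan is to work on the logarithmic scale, use concavity to collapse the $n$-dimensional supremum defining $f_{\uvt_n}$ to a one-dimensional quantity, and then Taylor-expand around the mode of $f_{\uvx}$.

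First I would pin down the mode. As $f_{\uvx}$ is a possibility function, $\sup f_{\uvx}=1$; once this supremum is known to be attained — this is where $f_{\uvx}$ must decay away from its peak, exactly as the hypothesis $\lim_{\norm{x}\to\infty}f_{\uvx}(x)=0$ ensures in Theorem~\ref{thm:LLN} — strict log-concavity forces the maximiser $\mu$ to be unique, so by Proposition~\ref{res:surjectiveTransformArgmax} one has $\mathbb{E}^*(\uvx)=\argmax f_{\uvx}=\{\mu\}$. Twice differentiability then gives $f_{\uvx}'(\mu)=0$ at this interior maximum, and $f_{\uvx}(\mu)=1$ by normalisation. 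Put $g(u)=\log f_{\uvx}(\mu+u)$: then $g$ is strictly concave, twice differentiable at $0$, with $g(0)=g'(0)=0$ and $g''(0)=f_{\uvx}''(\mu)=-1/\sigma^2$.

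Next, substituting $x_i=\mu+u_i$ in the $n$-fold analogue of the linear-combination formula \eqref{eq:multScalar} gives
$$
\log f_{\uvt_n}(t) = \sup\Big\{ \sum_{i=1}^n g(u_i) \,:\, u_1,\dots,u_n\in\mathbb{R},\ \sum_{i=1}^n u_i = \sqrt{n}\,t \Big\}.
$$
By concavity, every feasible $(u_i)$ obeys $\sum_i g(u_i)\le n\,g\big(n^{-1}\sum_i u_i\big)=n\,g(t/\sqrt{n})$, and the constant choice $u_i=t/\sqrt{n}$ is feasible and attains the bound, so $\log f_{\uvt_n}(t)=n\,g(t/\sqrt{n})$ (strict concavity shows this configuration is the unique maximiser, but only the value matters). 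A second-order Taylor expansion at $0$, valid since $g$ is twice differentiable there with $g(0)=g'(0)=0$, yields $g(s)=\tfrac12 g''(0)s^2+o(s^2)$, hence for each fixed $t$
$$
n\,g(t/\sqrt{n}) = \tfrac12 g''(0)\,t^2 + t^2\,o(1) \xrightarrow[n\to\infty]{} \tfrac12 g''(0)\,t^2 .
$$
When $f_{\uvx}''(\mu)\neq 0$ this limit is $-t^2/(2\sigma^2)$, so $f_{\uvt_n}(t)\to\exp(-t^2/(2\sigma^2))=\overline{\mathrm{N}}(t;0,\sigma^2)$; when $f_{\uvx}''(\mu)=0$ it is $0$, so $f_{\uvt_n}(t)\to 1$. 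The case $t=0$ is immediate from $f_{\uvt_n}(0)=1$.

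The argument itself is short; the subtle points are peripheral. The first is that one needs the maximiser $\mu$ to exist, which strict log-concavity alone does not guarantee — for instance $x\mapsto\exp(-e^{-x})$ is strictly log-concave, twice differentiable and has supremum $1$ that is never attained — so some coercivity of $f_{\uvx}$, implicit in the setting or carried over from Theorem~\ref{thm:LLN}, is required. The second is that $g$ may equal $-\infty$ where $f_{\uvx}$ vanishes, which only strengthens the Jensen inequality and causes no difficulty. The genuinely structural step is the reduction of the supremum over $\mathbb{R}^n$ to the scalar $n\,g(t/\sqrt{n})$, and there concavity does everything — the same mechanism by which Theorem~\ref{thm:LLN} concentrates the sample average onto the convex hull of the mode.
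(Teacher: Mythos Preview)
Your proof is correct and arrives at the same key identity as the paper, namely $f_{\uvt_n}(t)=f_{\uvx}(\mu+t/\sqrt{n})^n$, followed by a second-order Taylor expansion; the overall strategy is therefore the same.

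The execution of the reduction step is, however, cleaner than the paper's. The authors use Lagrange multipliers to obtain the stationarity condition $f'(x_i)f(x_j)=f'(x_j)f(x_i)$, then compute principal minors of the bordered Hessian to certify that $x_i=t/\sqrt{n}$ is a local maximum, and finally invoke log-concavity separately to upgrade this to a global maximum. Your one-line Jensen argument ($\sum_i g(u_i)\le n\,g(n^{-1}\sum_i u_i)$ for concave $g$) does all three at once and requires no second-order differentiability for the optimisation step itself. You also make explicit a point the paper glosses over: strict log-concavity alone does not guarantee that the supremum $1$ is attained, so some coercivity assumption is implicitly needed; the paper simply asserts that strictly log-concave functions ``are maximised at a single point''.
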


Theorem~\ref{thm:CLT} shows that there are two limiting possibility functions instead of a single one as in the standard formulation. Which limiting behaviour applies to $f_{\uvt_n}$ depends on how quickly $f_{\uvx}$ decreases around its $\argmax$. Theorem~\ref{thm:CLT} confirms that normal possibility functions also play a special role in the considered framework.

In the same way that the law of large numbers hinted at the definition of the expectation, %as the $\argmax$ of the possibility function under consideration,
the form of the asymptotic variance in the CLT suggests a definition of the variance based on the second derivative of $f_{\uvx}$ as
\begin{equation}
\label{eq:variance}
\mathbb{V}^*(\uvx) = - f_{\uvx}''\big(\mathbb{E}^*(\uvx)\big)^{-1}.
\end{equation}
In the case where $f_{\uvx}''(\mathbb{E}^*(\uvx)) = 0$, the limiting possibility function is $\bm{1}_{\mathbb{R}}$ which is equal to the normal possibility function $\overline{\mathrm{N}}(0, \tau^{-1})$ when the precision $\tau$ is set to $0$. It is therefore natural to set $\mathbb{V}^*(\uvx) = \infty$ in this situation. If $f_{\uvx}$ is not twice differentiable but is instead twice left- and right-differentiable at $\mathbb{E}^*(\uvx)$ then left/right variances can be introduced. We only consider the special case where both the left and right derivatives of $f_{\uvx}$ are different from zero for which we define $\mathbb{V}^*(\uvx) = 0$. Overall, the concepts of expected value and variance provide grounds for the Laplace approximation.

There is a natural link between the concept of variance for uncertain variables and the notion of Fisher information. Indeed, since any suitably smooth possibility function $f_{\uvx}$ describing an uncertain variable $\uvx$ verifies $f_{\uvx}''(\mathbb{E}^*(\uvx)) = (\log \circ f_{\uvx})''( \mathbb{E}^*(\uvx) )$, it follows from the properties of the expectation that the variance of the uncertain variable $\uvx$ can be expressed as the inverse of a natural analogue of the notion of Fisher information as 
$$
\mathbb{V}^*(\uvx) = \mathbb{E}^*\bigg(-\frac{\mathrm{d}^2}{\mathrm{d}x^2}\log f_{\uvx}(\uvx)\bigg)^{-1}.
$$
If $\uvx$ is related to a parameter of interest $\theta_0$ via a loss function $\mathcal{L}(\theta_0, \uvx)$ and if we consider that $f_{\uvx}(x) = \exp(-\mathcal{L}(\theta_0, x))$ then $\mathbb{V}^*(\uvx)^{-1} = \mathbb{E}^*(\partial_x^2 \mathcal{L}(\theta_0, \uvx)) = \partial_x^2 \mathcal{L}(\theta_0, x_0)$, with $x_0 = \argmin_{x \in \mathcal{X}} \mathcal{L}(\theta_0, x)$.

In order to illustrate the type of values that $\mathbb{E}^*(\cdot)$ and $\mathbb{V}^*(\cdot)$ take in practice, we consider the different possibility functions introduced so far: if $\uvx$ is described by
\begin{enumerate}[label=(\roman*),nosep]
\item $\overline{\mathrm{N}}(\mu,\sigma^2)$, with $\mu \in \mathbb{R}$ and $\sigma > 0$, then $\mathbb{E}^*(\uvx) = \mu$ and $\mathbb{V}^*(\uvx) = \sigma^2$
\item $\overline{\mathrm{G}}(\alpha, \beta)$, with $\alpha, \beta \geq 0$, then $\mathbb{E}^*(\uvx) = \alpha/\beta$ and $\mathbb{V}^*(\uvx) = \alpha/\beta^2$
\item $\overline{\mathrm{B}}(\alpha, \beta)$, with $\alpha, \beta \geq 0$, then $\mathbb{E}^*(\uvx) = \alpha / (\alpha + \beta)$ and $\mathbb{V}^*(\uvx) = \alpha\beta / (\alpha+\beta)^3$
\end{enumerate}
These examples show that the expected value (and possibly the variance) can match between a probability distribution and the corresponding possibility function. Yet, this is not necessarily the case: if $\uvx$ is described by $\overline{\mathrm{G}}(\alpha, \beta)$ then $1/\uvx$ is described by the inverse-gamma possibility function $\overline{\mathrm{IG}}(\alpha, \beta)$ and $\mathbb{E}^*(1/\uvx) = 1/\mathbb{E}^*(\uvx) = \beta/\alpha$ which differs from the mean of an inverse-gamma random variable with the same parameters. Similarly, if $\uvx$ is described by the chi-squared possibility function $\overline{\chi}^2(\mu,\beta)$ for some parameter $\mu$ and $\beta>0$ then $\mathbb{E}^*(\uvx) = \mu$ and $\mathbb{V}^*(\uvx) = 0$ since this possiblity function has non-zero left and right derivatives at $\mu$. The expected value and variance can also help identify a natural parametrisation for a possibility function as illustrated in the following example.

\begin{example}
If two random variables $X$ and $Z$ are such that $Z$ is gamma distributed and the conditional distribution of $X$ given $Z=z$ is $\mathrm{N}(\mu, z^{-1})$ for some $\mu \in \mathbb{R}$, then the marginal distribution of $X$ is a generalised Student's t distribution. One can therefore consider two uncertain variables $\uvx$ and $\uvz$ such that $\uvx$ is described conditionally by $f_{\uvx|\uvz}(x \given z) = \overline{\mathrm{N}}(x; \mu, z^{-1})$ and $f_{\uvz}(z) = \overline{\mathrm{G}}(z; \alpha, \beta)$ for some $\alpha \geq 0$ and $\beta \geq 0$. In this situation, the parameter $\uvx$ is marginally described by 
\begin{equation}
\label{eq:marginalisedNormalGamma}
f_{\uvx}(x) = \bigg( 1 + \dfrac{1}{2\beta}(\mu - x)^2 \bigg)^{-\alpha}.
\end{equation}
Since the expected value and variance are $\mathbb{E}^*(\uvx) = \mu$ and $\mathbb{V}^*(\uvx) = \beta/\alpha$, it reasonable to define the generalised Student's t possibility function as
$$
\overline{\mathrm{St}}(x; \nu, \mu, s) = \bigg( 1 + \dfrac{(\mu - x)^2}{\nu s} \bigg)^{-\nu/2},
$$
with $\mu \in \mathbb{R}$, $\nu \geq 0$ and $s > 0$ which verifies $\mathbb{E}^*(\uvx) = \mu$ and $\mathbb{V}^*(\uvx) = s$. 
\end{example}

The LLN and CLT for uncertain variables are expressed via point-wise convergence of possibility functions, which can be identified as a mode of convergence for uncertain variables: if we consider a sequence of uncertain variables $(\uvx_1, \uvx_2, \dots)$ on some state space $\mathcal{X}$ then we say that this sequence converges in \emph{outer probability measure} to another uncertain variable $\uvx$ if
\begin{equation}
\label{eq:convOpm}
\lim_{n \to \infty} \bar{\mathbb{P}} (\uvx_n = x) = \bar{\mathbb{P}} \big( \uvx = x \big),
\end{equation}
for any $x \in \mathcal{X}$. This notion of convergence is denoted $\uvx_n \xrightarrow{o.p.m.} \uvx$ or $\uvx_n \xrightarrow{o.p.m.} f_{\uvx}$ where $f_{\uvx}$ is the possibility function describing $\uvx$. This is equivalent to a point-wise convergence for the corresponding sequence of possibility functions and is therefore the convergence given by the CLT for uncertain variables. Note that this notion of convergence only makes sense for o.p.m.s defined as the supremum of a possibility function.

%%%%%%
\subsection{Inference with o.p.m.s}

We consider that the received observations $y_1, y_2, \dots$ are realisations of i.i.d.\ random variables $Y_1, Y_2, \dots$ with unknown common distribution $p_Y$. The notion of expected value introduced in Definition~\ref{def:expectation}  suggests that, whenever the prior uncertainty is modelled with a possibility function, the MAP is the most natural point estimator. Although the MAP is also important when representing the posterior as a probability distribution, there are some disadvantages, e.g., the (probabilistic) MAP might not transform coherently under reparametrisation \citep{DM07}. This is not the case with possibility functions since it holds that $\mathbb{E}^*(T(\uvtheta) \given y_{1:n}) = T\big(\mathbb{E}^*(\uvtheta \given y_{1:n}) \big)$ for any parameter $\uvtheta$ of interest, any sequence $y_{1:n}$ of observations and any mapping $T$.

%%%
\subsubsection{With the likelihood as a probability distribution}

We first consider the case where the likelihood is based on a family of probability distributions $\{p_{Y|\uvtheta}(\cdot \given \theta)\}_{\theta \in \Theta}$. There are several interesting connections between the Bayesian nature of $f_{\uvtheta|Y}(\cdot \given y)$ as defined in \eqref{eq:likelihoodAsProbability} and frequentist inference when the prior $f_{\uvtheta}$ is assumed to be uninformative, i.e.\ $f_{\uvtheta} = \bm{1}_{\Theta}$; it indeed holds that
\begin{enumerate}[label=(\roman*), nosep]
\item The posterior expected value $\mathbb{E}^*(\uvtheta \given Y = y)$ is the MLE of the parameter, which we denote by $\hat{\theta}(y)$.
\item The function $f_{\uvtheta|Y}(\theta_0 \given \cdot)$ for a given point $\theta_0 \in \Theta$ is the (simple) LRT for the hypothesis $\mathrm{H}_0: \uvtheta = \theta_0$ versus $\mathrm{H}_1: \uvtheta \neq \theta_0$.
\item The inverse of the posterior variance of $\uvtheta$ is
$$
\mathbb{V}^*(\uvtheta \given Y = y)^{-1} = \mathbb{E}^*\bigg(-\dfrac{\partial^2}{\partial \theta^2} \log p_{Y|\uvtheta}(y \given \uvtheta) \,\bigg|\, Y = y \bigg) = -\dfrac{\partial^2}{\partial \theta^2} \log p_{Y|\uvtheta}(y \given \hat\theta(y)),
$$
which is the observed information, denoted by $\mathcal{J}^*$.
\end{enumerate}
These aspects reinforce the fact that possibility functions represent information in a way that is compatible with frequentist inference. Before continuing with our analysis, we introduce in the next section a few additional concepts for the case where the likelihood is a possibility function.

%%%
\subsubsection{With the likelihood as a possibility function}

We follow the approach of generalised Bayesian inference and consider the situation where it is not convenient to model the true data-generating process. Instead, in order to learn some characteristics of this process, we identify a likelihood (as a possibility function) for which the MLE tends to the quantity of interest as the number of observations tends to infinity. A posterior representation of the uncertainty can then be determined, potentially using some prior information about the parameter of interest if any is available. In this case, the uncertainty about the relation between the observations $y_1,y_2,\dots$ and the parameter of interest $\uvtheta$ can also be modelled via a sequence of independently-described uncertain variable $\uvy_1,\uvy_2,\dots$ with common possibility function $f_{\uvy|\uvtheta}(\cdot \given \theta)$, conditionally on $\uvtheta = \theta$. In this context, the MAP is an uncertain variable defined as $\uvtheta^*_n = \mathbb{E}^*(\uvtheta \given \uvy_{1:n})$ and depends on the choice of prior possibility function $f_{\uvtheta}$. If $f_{\uvtheta} = \bm{1}_{\Theta}$ then $\uvtheta^*_n$ is the MLE, denoted by $\hat{\uvtheta}_n$ and defined as
$$
\hat{\uvtheta}_n = \argmax_{\theta \in \Theta} \dfrac{1}{n} \sum_{i=1}^n \log f_{\uvy|\uvtheta}(y_i \given \theta).
$$

Since none of the possibility functions in the parametric family $\{f_{\uvy|\uvtheta}(\cdot \given \theta)\}_{\theta \in \Theta}$ corresponds to the true sampling distribution of the observations, we must find another way to define the best-fitting parameter $\theta_0$ which we will refer to as the \emph{true} parameter for simplicity. One possible approach is to follow the M-estimation literature \citep{VPG91} and use consistency as a way to define $\theta_0$ rather than as a result about the convergence of the considered estimator to the actual true parameter as is standard. This approach is also the one used by \citet{BHW16}. Indeed, it follows from the standard LLN that
\begin{equation}
\label{eq:MLE_limit}
\dfrac{1}{n} \sum_{i=1}^n \log f_{\uvy|\uvtheta}(Y_i \given  \theta) \xrightarrow{p.} \mathbb{E}( \log f_{\uvy|\uvtheta}(Y \given \theta) ),
\end{equation}
where $\xrightarrow{p.}$ and $\mathbb{E}(\cdot)$ denote the convergence in probability and the expectation w.r.t.\ $p_Y$ respectively. The true parameter $\theta_0$ is then defined as the point where the right hand side of \eqref{eq:MLE_limit} is maximized. If there is no such point then $\theta_0$ is not well-defined. The standard concept of identifiability translated for possibility functions, that is $f_{\uvy|\uvtheta}(\cdot \given \theta) \neq f_{\uvy|\uvtheta}(\cdot \given \psi)$ for any $\theta, \psi \in \Theta$ such that $\theta \neq \psi$, is not sufficient to ensure the existence of such a point $\theta_0$. We instead consider the following stronger assumption: $f_{\uvy|\uvtheta}(y \given \cdot)$ is strictly log-concave for any $y \in \mathcal{Y}$. Indeed, it follows from this assumption that $\mathbb{E}( \log f_{\uvy|\uvtheta}(Y \given \cdot))$ is strictly concave and has a single maximizer $\theta_0$. This assumption is in line with the other ones made throughout the article, especially in the CLT for uncertain variables. In the remainder of this section, we will assume that $\uvy$ is described by $f_{\uvy|\uvtheta}(\cdot \given \theta_0)$ and we will denote by $\mu_0$ the expected value of $\uvy$.

\begin{remark}
Since any estimator $\uvtheta_n$ of $\theta_0$ is itself an uncertain variable, the usual characteristics of estimators have to be redefined. For instance, the bias of $\uvtheta_n$ can be defined as $b^*(\uvtheta_n) = \mathbb{E}^*( \uvtheta_n - \theta_0 \given \theta_0)$ and, naturally, one can refer to $\uvtheta_n$ as an unbiased estimator if $b^*(\uvtheta_n) = 0$.
\end{remark}

Having used the standard notion of consistency to define $\theta_0$, we can now introduce a form of consistency for estimators based on uncertain variables. However, before introducing this weaker notion of consistency, we introduce a suitable version of the concept of identifiability as follows.

\begin{defn}
Let $\{f_{\uvy|\uvtheta}(\cdot\given \theta)\}_{\theta \in \Theta}$ be a parametrized family of conditional possibility functions on $\mathcal{Y}$ describing an uncertain variable $\uvy$ and let $\theta_0 \in \Theta$ be the true parameter. The parameter $\theta_0$ is said to be identifiable if, for any $\theta \in \Theta$, $\mathbb{E}^*(\uvy\given \theta)$ is a singleton and $\mathbb{E}^*(\uvy\given \theta) = \mathbb{E}^*(\uvy \given \theta_0)$ implies $\theta = \theta_0$.
\end{defn}

This is a much stronger assumption than standard identifiability since any parameter that does not affect the mode of the corresponding likelihood would be unidentifiable. Location parameters are typically identifiable but they are not the only ones; for instance, if the likelihood is a gamma possibility function, then each of the two parameters is identifiable when assuming the other one is fixed. When considering loss functions, the identifiability assumption implies that the loss $\mathcal{L}(\theta_0, \cdot)$ should be minimised at a point where no other loss function $\mathcal{L}(\theta, \cdot)$, $\theta \neq \theta_0$, is minimised.

\begin{example}
If the likelihood $f_{\uvy|\uvtheta}(\cdot \given \theta)$ is equal to $\overline{\mathrm{N}}(\theta, \sigma^2)$ for some $\sigma > 0$ then any $\theta \in \mathbb{R}$ is identifiable. However, the likelihood $\overline{\mathrm{N}}(\mu, \theta)$, $\theta > 0$, is nowhere identifiable; indeed, the expected value $\mathbb{E}^*(\uvy \given \theta) = \mu$ is independent of $\theta$. Therefore, one must use a p.d.f.\ as a likelihood when trying to learn the variance of a sequence of observations.
\end{example}

Some weak form of consistency can be obtained for the MLE $\hat{\uvtheta}_n$ by assuming that the parameter $\theta_0$ is identifiable. Indeed, the LLN for uncertain variables yields that
$$
\dfrac{1}{n} \sum_{i=1}^n \log f_{\uvy|\uvtheta}(\uvy_i \given \theta) \xrightarrow{o.p.m.} \mathbb{E}^*\big( \log f_{\uvy|\uvtheta}(\uvy \given \theta)\big) = \log f_{\uvy|\uvtheta}(\mu_0 \given \theta).
$$
The identifiability assumption ensures that only $f_{\uvy|\uvtheta}(\cdot\given\theta_0)$ has $\mu_0$ as a mode, so that $\theta_0$ is the unique maximizer of $\log f_{\uvy|\uvtheta}(\mu_0 \given \cdot)$, and it holds that $\mathbb{E}^*(\log f_{\uvy|\uvtheta}(\uvy \given \theta_0)) = 0$ as in the situation where a probabilistic model is correctly specified.

A practical illustration of the use of the LLN and CLT is given in the next example. 

\begin{example}
\label{ex:LLN}
We continue with the situation introduced in Example~\ref{ex:normalNormalcase} where $f_{\uvy|\uvtheta}(\cdot \given \uvtheta)$ is equal to $\overline{\mathrm{N}}(\uvtheta, \sigma^2)$ and where the sample average $\uvs_n = n^{-1}\sum_{i=1}^n \uvy_i$ is a sufficient statistics. We now consider that the observations come from a sequence of i.i.d.\ random variables $Y,Y_1,Y_2,\dots$ with unknown common law $p_Y$. The standard law of large numbers yields that the sample average $S_n = n^{-1}\sum_{i=1}^n Y_i$ tends to $\mathbb{E}(Y)$ whereas Theorem~\ref{thm:LLN} yields that $\uvs_n$ tends to $\mathbb{E}^*(\uvy \given \theta_0) = \theta_0$, with $\theta_0$ the true value of the parameter. Combining these two results, we obtain that $\theta_0 = \mathbb{E}(Y)$, i.e., our model based on possibility functions targets $\mathbb{E}(Y)$ although we did not provide a probabilistic model for $p_Y$. This type of result is usual in M-estimation, yet, we highlight that the proposed approach additionally provides a posterior quantification of the uncertainty via $f_{\uvtheta|\uvy_{1:n}}(\cdot \given y_{1:n})$ which can take into account a possibility function $f_{\uvtheta}$ describing $\uvtheta$ a priori. Under the considered modelling assumptions, the sequence of possibility functions describing $\sqrt{n}(\uvs_n - \theta_0)$ converges point-wise to $\overline{\mathrm{N}}(0, \sigma^2)$.
\end{example}

Example~\ref{ex:LLN} gives a result of asymptotic normality for estimating the expected value with the likelihood defined as a normal possibility function. More general asymptotic normality results will be given in further sections.

%%%%%%%%%%%%%
\section{Asymptotic analysis with unknown sampling distribution}
\label{sec:MLE_un}

In this section we analyse various asymptotic properties of statistical procedures in the situation where the true sampling distribution is not available for analysis. The likelihood of a parameter $\theta \in \Theta \subseteq \mathbb{R}$ for the observations $y_1,\dots,y_n$ in $\mathcal{Y} \subseteq \mathbb{R}$ can be either a probability distribution or a possibility function so we denote it by $L_n(\theta)$ in general. The corresponding log-likelihood is denoted by $\ell_n(\theta)$. We model the uncertainty about the true parameter $\theta_0$ via an uncertain variable $\uvtheta$ on $\Theta$ and we consider that there is some prior information about $\uvtheta$ taking the form of a prior possibility function $f_{\uvtheta}$. We formulate the following assumptions:
\begin{enumerate}[label=A.\arabic*, nosep, series=assump]
\item \label{it:identifiable} The parameter $\theta_0$ is well defined and the MLE is consistent
\item \label{it:priorDiff} The prior possibility function $f_{\uvtheta}$ is continuous in a neighbourhood of $\theta_0$
\end{enumerate}
The generality of Assumption~\ref{it:identifiable} makes it suitable for the two types of likelihood that are considered. The specific conditions under which Assumption~\ref{it:identifiable} holds for a probabilistic likelihood are the usual ones, see for instance \cite{Aw49} and \cite{LLC90}.

\subsection{Bayesian inference} 
\label{sec:BvM}

One of the most important asymptotic results in Bayesian statistics is the BvM theorem which can be interpreted as follows: given some initial distribution, the posterior becomes independent of the prior as we take the number of observations to infinity, and the posterior tends to a normal distribution. The posterior possibility function describing $\uvtheta$ given a sequence of observations $(y_1,\dots,y_n)$ can be expressed as
$$
f_{\uvtheta}(\theta \given  y_{1:n}) = \dfrac{L_n(\theta) f_{\uvtheta}(\theta)}{\sup_{\theta' \in \Theta} L_n(\theta') f_{\uvtheta}(\theta')}.
$$
We omit the variables on which the possibility function describing $\uvtheta$ is conditioned on in the posterior $f_{\uvtheta}(\cdot \given  y_{1:n})$ since they could be either random variables or uncertain variables. In this context, the BvM theorem takes the following form.

\begin{thm}[Bernstein-von Mises]
\label{res:BvM}
Under Assumptions~\ref{it:identifiable}-\ref{it:priorDiff}, it holds that, for large values of $n$,
$$
f_{\uvtheta}(\theta \given y_{1:n}) \approx \overline{\mathrm{N}}\bigg(\theta; \theta_0 + \dfrac{\Delta_n}{\sqrt{n}}, \dfrac{1}{\mathcal{J}^*_n} \bigg),
$$
where $\Delta_n = \sqrt{n} \partial_{\theta} \ell_n(\theta_0) / \mathcal{J}^*_n$ and where
\begin{equation}
\label{eq:obsInfo}
\mathcal{J}^*_n = - \partial_{\theta}^2 \ell_n(\hat{\theta}_n)
\end{equation}
is the observed information with $\hat{\theta}_n$ the MLE based on $y_{1:n}$.
\end{thm}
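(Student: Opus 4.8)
The plan is to work on the logarithmic scale and mimic the classical proof of the Bernstein--von Mises theorem, the crucial simplification in the present setting being that the posterior possibility function is normalised by a supremum rather than by an integral, so that only point-wise (or locally uniform) control of the log-posterior is needed and no total-variation argument is required. First I would write
$$\log f_{\uvtheta}(\theta \given y_{1:n}) = \ell_n(\theta) + \log f_{\uvtheta}(\theta) - \sup_{\theta' \in \Theta}\big( \ell_n(\theta') + \log f_{\uvtheta}(\theta') \big),$$
and denote by $\tilde{\theta}_n$ the maximiser on the right-hand side, i.e.\ the MAP. Since $\mathcal{J}^*_n$ grows linearly in $n$ while, by Assumption~\ref{it:priorDiff}, $\log f_{\uvtheta}$ is continuous and hence bounded on a fixed neighbourhood of $\theta_0$, the prior term is asymptotically negligible relative to the likelihood term; a standard argument then gives that $\tilde{\theta}_n$ and the MLE $\hat{\theta}_n$ differ by $o(1/\sqrt{n})$ and that both are consistent by Assumption~\ref{it:identifiable}.

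Next I would Taylor-expand $\ell_n$ around $\hat{\theta}_n$, using $\partial_{\theta}\ell_n(\hat{\theta}_n)=0$, to obtain
$$\ell_n(\theta) = \ell_n(\hat{\theta}_n) - \tfrac{1}{2}\mathcal{J}^*_n\,(\theta - \hat{\theta}_n)^2 + R_n(\theta),$$
with $R_n$ the third-order remainder. On the $O(1/\sqrt{n})$ scale on which the claimed normal possibility function is concentrated, $R_n$ is negligible provided $\partial_{\theta}^3 \ell_n$ does not grow faster than $n$ near $\theta_0$; this is the one additional regularity input needed and it is part of the hypotheses behind Assumption~\ref{it:identifiable} (in the possibility case it follows from strict log-concavity and twice differentiability of $f_{\uvy|\uvtheta}(\cdot\given\theta)$ together with the LLN for uncertain variables, Theorem~\ref{thm:LLN}, applied to $\partial_{\theta}^2 \log f_{\uvy|\uvtheta}$; in the probabilistic case it is the usual uniform-bound condition). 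Combining this with $\log f_{\uvtheta}(\theta) = \log f_{\uvtheta}(\theta_0) + o(1)$ uniformly on that scale, both the numerator and the supremum in the denominator pick up the same leading term $\ell_n(\hat{\theta}_n) + \log f_{\uvtheta}(\theta_0)$, which cancels, leaving
$$\log f_{\uvtheta}(\theta \given y_{1:n}) = -\tfrac{1}{2}\mathcal{J}^*_n\,(\theta - \hat{\theta}_n)^2 + o(1),$$
that is, $f_{\uvtheta}(\cdot\given y_{1:n}) \approx \overline{\mathrm{N}}(\hat{\theta}_n, 1/\mathcal{J}^*_n)$, which already has the shape of Theorem~\ref{thm:CLT}.

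It then remains to replace the data-dependent centring $\hat{\theta}_n$ by $\theta_0 + \Delta_n/\sqrt{n}$. For this I would expand the score around $\theta_0$: since $0 = \partial_{\theta}\ell_n(\hat{\theta}_n) = \partial_{\theta}\ell_n(\theta_0) + \partial_{\theta}^2\ell_n(\bar{\theta}_n)(\hat{\theta}_n - \theta_0)$ for some $\bar{\theta}_n$ between $\theta_0$ and $\hat{\theta}_n$, and $\partial_{\theta}^2 \ell_n(\bar{\theta}_n) = -\mathcal{J}^*_n(1 + o(1))$ by consistency and the same third-derivative control, one gets $\hat{\theta}_n - \theta_0 = \partial_{\theta}\ell_n(\theta_0)/\mathcal{J}^*_n + o(1/\sqrt{n}) = \Delta_n/\sqrt{n} + o(1/\sqrt{n})$, which is exactly the definition of $\Delta_n$; substituting back gives the stated approximation.

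The main obstacle is making precise the sense in which the approximation ``$\approx$'' is understood and, relatedly, ensuring that the supremum in the denominator is genuinely attained in the $O(1/\sqrt{n})$ neighbourhood of $\hat{\theta}_n$ rather than in the tails, i.e.\ ruling out a competing near-maximiser of $\ell_n + \log f_{\uvtheta}$ far from $\theta_0$. Under Assumption~\ref{it:identifiable} this is handled in the same way as the consistency of the MLE/MAP itself --- via log-concavity in the possibility case, or a Wald-type separation condition in the probabilistic case --- but it is the step that carries the real content and must be imported from the conditions underlying Assumption~\ref{it:identifiable}.
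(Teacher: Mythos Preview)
Your proposal is correct and follows essentially the same route as the paper's proof: both arguments use continuity of the prior to replace the supremum in the denominator by $L_n(\hat{\theta}_n)$ (up to the prior constant), Taylor-expand $\ell_n$ to second order around $\hat{\theta}_n$, and use a first-order expansion of the score at $\theta_0$ to obtain $\sqrt{n}(\hat{\theta}_n-\theta_0)\approx\Delta_n$. The only cosmetic difference is that the paper first passes to the rescaled variable $\uvpsi=\sqrt{n}(\uvtheta-\theta_0)$ before performing these expansions, whereas you work directly in $\theta$ and rescale only implicitly; your explicit discussion of the third-derivative control and of the tail issue for the supremum is in fact more careful than the paper's treatment, which simply invokes Assumption~\ref{it:identifiable} at those points.
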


Theorem~\ref{res:BvM} states that the posterior possibility function is approximately normal with expected value $\theta_0 + \Delta_n/\sqrt{n}$ (which tends to $\theta_0$ as $n$ tends to infinity) and with the inverse of $\mathcal{J}^*_n$ as variance. An important aspect of the proof of the BvM theorem is that the nature of the likelihood $L_n$ (expressed as a probability distribution or as a possibility function) had no bearing on the calculations. Although the BvM theorem guarantees (under assumptions) that the prior information is forgotten in the limit, the nature of the prior information defines the nature of posterior representation of uncertainty as a probability distribution or as a possibility function. In the next section, we consider how we can predict the behaviour of the MAP $\mathbb{E}^*(\uvtheta \given y_{1:n})$ and of the credibility of the true parameter value $f_{\uvtheta}(\theta_0 \given y_{1:n})$ when varying the observations. When the prior $f_{\uvtheta}$ is uninformative, these quantities are respectively the (generalised) MLE and likelihood ratio test as highlighted earlier.

\subsection{Frequentist-type guarantees}

The main reason for the weak dependency of the BvM theorem on the nature of the likelihood is that the observations are fixed in a Bayesian setting. However, in order to derive frequentist-type guarantees for the the MAP $\mathbb{E}^*(\uvtheta \given y_{1:n})$ and for the credibility of the true parameter value $f_{\uvtheta}(\theta_0 \given y_{1:n})$, the variability of the sequence of observations must be taken into account. We henceforth consider the case where the likelihood is a possibility function and make explicit the dependency of the likelihood on the data by writing $L(\cdot\,; y_{1:n})$ instead of $L_n$ and similarly for the log-likelihood. In order to model the fact that the data-generating process is not known, we define $\uvy$ as the uncertain variable described by the possibility function $f_{\uvy} = f_{\uvy|\uvtheta}(\cdot \given \theta_0)$ where $\theta_0$ is the true value of the parameter.

The observations $y_1,\dots,y_n$ are assumed to be realisations of a sequence of independent uncertain variables $\uvy_1,\dots,\uvy_n$ identically described by $f_{\uvy}$, and we denote by $\bar{\mathbb{P}}$ the induced o.p.m.\ on $\Omega_{\mathrm{u}}$. Since the observations are now uncertain variables, the MAP itself is an uncertain variable defined as $\uvtheta^*_n = \mathbb{E}^*(\uvtheta \given \uvy_{1:n})$.

Before deriving asymptotic normality results, we need to introduce the analogues of the probabilistic results used when performing a standard statistical analysis. A concept that is needed to state the assumptions and prove asymptotic normality is the analogue of the usual dominance property for sequences of random variables, which can be rephrased as: the sequence of uncertain variables $(\uvx_1, \uvx_2, \dots)$ on $\mathcal{X}$ is \emph{dominated by another sequence $(s_1, s_2, \dots)$ on $\mathcal{X}$}, or is $O(s_n)$, if for all $\epsilon > 0$ there exist $N_{\epsilon} > 0$, $\delta_{\epsilon} > 0$ such that $\bar{\mathbb{P}} \big( \| \uvx_n \|/\| s_n \| > \delta_{\epsilon} \big) < \epsilon$ for all $n > N_{\epsilon}$, where $\bar{\mathbb{P}}$ is induced by the sequence $(\uvx_1, \uvx_2, \dots)$. We can now formulate the assumptions required for the asymptotic analysis of this section: 
\begin{enumerate}[resume*=assump]
\item \label{it:logConcave2} $f_{\uvy}$ is strictly log-concave
\item The mapping $(\theta,y) \mapsto L(\theta; y)$ is twice-differentiable in $y$ as well as thrice-differentiable in $\theta$
\item \label{it:bounded2} $\partial_{\theta}^3 \ell(\theta; \uvy_{1:n}) = O(n)$ under the o.p.m.\ $\bar{\mathbb{P}}$
\end{enumerate}
Although Assumption~\ref{it:logConcave2} is restrictive, it is always possible to introduce a tempered version $f^{\mathrm{t}}_{\uvy}$ of the possibility function $f_{\uvy}$, with $f^{\mathrm{t}}_{\uvy}(y) \geq f_{\uvy}(y)$ for any $y \in \mathcal{Y}$, which satisfies it. The cost of such a change of possibility function will induce an increase in asymptotic variance since the variance of $\uvy$ under the tempered possibility function will be necessarily greater than under $f_{\uvy}$.

In this context, the score function is defined as $s_{\theta_0}(\cdot) = \partial_{\theta} \ell(\theta_0; \cdot)$ and its expected value is equal to $0$; indeed $\mathbb{E}^* ( s_{\theta_0}(\uvy)) = s_{\theta_0}( \mathbb{E}^*(\uvy)) = 0$. However, the Fisher information cannot be defined as the expectation of the squared score as is usual; in fact, the expected value of the latter is also equal to $0$. Instead, we introduce the Fisher information $\mathcal{I}^*(\theta)$ as
\begin{equation}
\label{eq:Fisher}
\mathcal{I}^*(\theta) = \mathbb{E}^*( -\partial^2_{\theta} \ell(\theta; \uvy) \given \theta ).
\end{equation}
The following example illustrates one of the advantages of this notion of Fisher information.

\begin{remark}
\citet[Example~7.2.1]{EL04} highlights some limitations of the standard notion of Fisher information $\mathcal{I}(\theta)$ by pointing out that $\mathcal{I}(\theta) = 0$ for the likelihood $p_Y(y \given \theta) = \mathrm{N}(y; \theta^3, \sigma^2)$ at $\theta = 0$, which is counter-intuitive. With uncertain variables, the variance corresponding to the possibility function $f_{\uvy|\uvtheta}(y \given \theta) = \overline{\mathrm{N}}(y; \theta^3, \sigma^2)$ is infinite when $\theta = 0$, which is consistent with $\mathcal{I}^*(\theta) = 0$.
\end{remark}

\subsubsection{Asymptotic normality}

One of the most important asymptotic results in frequentist statistics is the asymptotic normality of the MLE. We show in the following theorem that a similar result holds for the MAP $\uvtheta^*_n$, which is equal to the MLE when the prior $f_{\uvtheta}$ is uninformative.

\begin{thm}
\label{thm:asympNormUnknownSamp}
Under Assumptions~\ref{it:identifiable}-\ref{it:bounded2}, it holds that $\sqrt{n}(\uvtheta^*_n - \theta_0) \xrightarrow{o.p.m.} \overline{\mathrm{N}}(0, \sigma^2)$ with $\sigma^2 = \mathbb{V}^*(s_{\theta_0}(\uvy))/\mathcal{I}^*(\theta_0)^2$. 
\end{thm}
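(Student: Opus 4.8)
The plan is to remove the prior from the problem via the Bernstein--von Mises theorem and then to analyse the resulting centring term with the LLN and the CLT for uncertain variables. Under Assumptions~\ref{it:identifiable}--\ref{it:priorDiff}, Theorem~\ref{res:BvM} gives that, for large $n$, the posterior possibility function $f_{\uvtheta}(\cdot \given \uvy_{1:n})$ is close to $\overline{\mathrm{N}}\big(\cdot\,;\theta_0+\Delta_n/\sqrt{n},1/\mathcal{J}^*_n\big)$, whose mode is $\theta_0+\Delta_n/\sqrt{n}$. Since $\uvtheta^*_n = \mathbb{E}^*(\uvtheta\given\uvy_{1:n})$ is by definition the mode of that posterior, the first step is to upgrade this approximation into the statement $\sqrt{n}(\uvtheta^*_n-\theta_0)-\Delta_n \xrightarrow{o.p.m.} 0$, using that point-wise convergence of possibility functions controls their $\argmax$ (along the lines of Proposition~\ref{res:surjectiveTransformArgmax}) together with an o.p.m.\ version of Slutsky's lemma. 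It then remains to show $\Delta_n \xrightarrow{o.p.m.} \overline{\mathrm{N}}(0,\sigma^2)$, which I would obtain by writing $\Delta_n = \big(n^{-1/2}\sum_{i=1}^n s_{\theta_0}(\uvy_i)\big)/\big(n^{-1}\mathcal{J}^*_n\big)$ with $s_{\theta_0}=\partial_{\theta}\ell(\theta_0;\cdot)$ and handling numerator and denominator separately.

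For the denominator, I would Taylor-expand $\partial_{\theta}^2\ell(\hat{\uvtheta}_n;\uvy_{1:n})$ about $\theta_0$; the consistency of the MLE (Assumption~\ref{it:identifiable}) and the bound $\partial_{\theta}^3\ell(\cdot\,;\uvy_{1:n})=O(n)$ (Assumption~\ref{it:bounded2}) make the remainder negligible in o.p.m., so that $n^{-1}\mathcal{J}^*_n$ and $-n^{-1}\sum_{i=1}^n\partial_{\theta}^2\ell(\theta_0;\uvy_i)$ share the same limit; the LLN for uncertain variables (Theorem~\ref{thm:LLN}), applied to the i.i.d.\ uncertain variables $-\partial_{\theta}^2\ell(\theta_0;\uvy_i)$ whose common possibility function is the pushforward of $f_{\uvy}$ under $y\mapsto-\partial_{\theta}^2\ell(\theta_0;y)$, then yields $n^{-1}\mathcal{J}^*_n \xrightarrow{o.p.m.} \mathbb{E}^*\big(-\partial_{\theta}^2\ell(\theta_0;\uvy)\big)=\mathcal{I}^*(\theta_0)>0$. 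For the numerator, the key point is that $\mathbb{E}^*(s_{\theta_0}(\uvy))=s_{\theta_0}(\mathbb{E}^*(\uvy))=s_{\theta_0}(\mu_0)=0$ by the transformation property of $\mathbb{E}^*$, so $n^{-1/2}\sum_{i=1}^n s_{\theta_0}(\uvy_i)$ is a centred and normalised sum and the CLT for uncertain variables (Theorem~\ref{thm:CLT}) gives $n^{-1/2}\sum_{i=1}^n s_{\theta_0}(\uvy_i) \xrightarrow{o.p.m.} \overline{\mathrm{N}}\big(0,\mathbb{V}^*(s_{\theta_0}(\uvy))\big)$, the limiting variance being exactly $\mathbb{V}^*(s_{\theta_0}(\uvy))$ by \eqref{eq:variance}. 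Dividing a sequence converging in o.p.m.\ to $\overline{\mathrm{N}}(0,v)$ by one converging to the nonzero constant $c=\mathcal{I}^*(\theta_0)$ rescales the limit to $\overline{\mathrm{N}}(0,v/c^2)$, since $\uvx$ described by $\overline{\mathrm{N}}(\mu,\sigma^2)$ implies $\alpha\uvx$ described by $\overline{\mathrm{N}}(\alpha\mu,(\alpha\sigma)^2)$; combined with the first paragraph this gives $\sqrt{n}(\uvtheta^*_n-\theta_0)\xrightarrow{o.p.m.}\overline{\mathrm{N}}\big(0,\mathbb{V}^*(s_{\theta_0}(\uvy))/\mathcal{I}^*(\theta_0)^2\big)$.

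The main obstacle is twofold. First, one needs o.p.m.\ analogues of Slutsky's lemma and of the continuous mapping theorem in order to manipulate the ratio and the various $o(1)$ terms; here Assumption~\ref{it:bounded2} is precisely what forces the Taylor remainder in the score equation to vanish in o.p.m. Second, and more delicate, one must verify that the possibility functions describing $s_{\theta_0}(\uvy)$ and $-\partial_{\theta}^2\ell(\theta_0;\uvy)$, obtained by pushing $f_{\uvy}$ forward through these maps via \eqref{eq:changeOfVariable}, still satisfy the regularity hypotheses of Theorems~\ref{thm:LLN} and~\ref{thm:CLT} (strict log-concavity, twice differentiability, vanishing at infinity); this is not automatic, since a nonlinear reparametrisation of $y$ need not preserve log-concavity, though it does when $s_{\theta_0}$ is affine in $y$, as for a location parameter, and a constant second derivative (again the location example) additionally forces the degenerate branch of the CLT to be handled. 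I would also note in passing that the sampling variance $\sigma^2/n$ found here differs in general from the posterior variance $1/\mathcal{J}^*_n\approx 1/(n\,\mathcal{I}^*(\theta_0))$ appearing in the BvM theorem, which is the familiar ``sandwich'' discrepancy of M-estimation and shows that the associated credible sets need not be calibrated.
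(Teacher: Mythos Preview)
Your proposal is correct and follows essentially the same route as the paper: use the Bernstein--von Mises approximation to reduce $\sqrt{n}(\uvtheta^*_n-\theta_0)$ to $\Delta_n$, then apply the LLN for uncertain variables to get $n^{-1}\mathcal{J}^*_n\xrightarrow{o.p.m.}\mathcal{I}^*(\theta_0)$, the CLT for uncertain variables to get $n^{-1/2}\sum_i s_{\theta_0}(\uvy_i)\xrightarrow{o.p.m.}\overline{\mathrm{N}}(0,\mathbb{V}^*(s_{\theta_0}(\uvy)))$, and combine via an o.p.m.\ Slutsky lemma (which the paper states as Proposition~\ref{prop:slutsky}). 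Your extra care about the Taylor expansion linking $\partial_{\theta}^2\ell(\hat{\uvtheta}_n;\cdot)$ to $\partial_{\theta}^2\ell(\theta_0;\cdot)$ and your caveat that the pushforward possibility functions must meet the hypotheses of Theorems~\ref{thm:LLN} and~\ref{thm:CLT} are points the paper glosses over rather than treats differently.
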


The advantage of Theorem~\ref{thm:asympNormUnknownSamp} is that it provides some quantitative means of assessing the asymptotic performance of the MAP even when the true data generating process is not known. Indeed, the asymptotic variance in Theorem~\ref{thm:asympNormUnknownSamp} only involves derivatives of the likelihood and no expectations w.r.t.\ the true data distribution.

It follows from Assumption~\ref{it:logConcave2} that the score $s_{\theta_0}$ is continuous and strictly monotone and is therefore invertible. Assuming that both $s_{\theta_0}$ and its inverse are twice differentiable and applying Proposition~\ref{prop:mapVariance}, we can express the term $\mathbb{V}^*(s_{\theta_0}(\uvy))$ in the asymptotic variance of Theorem~\ref{thm:asympNormUnknownSamp} more explicitly as
\begin{equation*}
\mathbb{V}^*(s_{\theta_0}(\uvy)) = \mathbb{V}^*(\uvy) \big( \partial_y s_{\theta_0}(\mu_0) \big)^2,
\end{equation*}
with $\mu_0 = \mathbb{E}^*(\uvy)$. It is interesting to see the second derivative in $y$ appear in the expression of the asymptotic variance (via the variance term $\mathbb{V}^*(\uvy)$) since the latter usually only depends on the curvature of the log-likelihood $\ell(\theta; \cdot)$ as a function of $\theta$ via the Fisher information. The curvature of $\ell(\theta; \cdot)$ evaluated at $\theta_0$ is associated with the variation in the observations at the true parameter value, which might indeed influence the algorithm. In particular, a log-likelihood which would be ``flat'' as a function of $y$ around the point $\mu_0$ would yield a potentially infinite asymptotic variance via the term $\mathbb{V}^*(\uvy)$. Two special cases can then be identified: If $\theta$ is a location parameter then the asymptotic variance in Theorem~\ref{thm:asympNormUnknownSamp} simplifies to $\mathbb{V}^* (\uvy)$ since it holds that $( \partial_y s_{\theta_0}(\mu_0))^2 = \mathcal{I}^*(\theta_0)^2$. If the likelihood is based on an exponentiated reward function $-\mathcal{L}(\theta, \cdot)$ then the asymptotic variance in Theorem~\ref{thm:asympNormUnknownSamp} can be expressed as
$$
\bigg( \dfrac{\partial_y \partial_{\theta} \mathcal{L}(\theta_0,\mu_0)}{\partial_{\theta}^2 \mathcal{L}(\theta_0,\mu_0)} \bigg)^2 \partial_y^2 \mathcal{L}(\theta_0,\mu_0),
$$
with $\mu_0 = \argmin_{y \in \mathcal{Y}} \mathcal{L}(\theta_0, y)$. This expression highlights the structure of the asymptotic variance as a combination of second-order derivatives.

\subsubsection{Hypothesis testing}
\label{sec:LRT_unknown}

We consider a simple hypothesis test $\mathrm{H}_0 : \uvtheta = \theta_0$ versus $\mathrm{H}_1:\uvtheta \neq \theta_0$ of the form $\lambda(y_{1:n}) \doteq f_{\uvtheta}(\theta_0 \given y_{1:n})$, where, as opposed to LRTs, prior information can be taken into account via the prior $f_{\uvtheta}$. The null hypothesis is then rejected if $\lambda(y_{1:n})$ is less than or equal to some threshold $c$ to be determined. We follow the usual approach and recast the problem of setting a value for $c$ into choosing a level of confidence $\alpha \in (0,1)$ and solving the inequality $\bar{\mathbb{P}}( \lambda(\uvy_{1:n}) \leq c) \leq \alpha$. Assuming for simplicity that $f_{\uvtheta}(\cdot \given  y)$ is unimodal, the proposed interpretation facilitates the calculation of an $\alpha$-credible interval $[a,b]$ as
$$
f_{\uvtheta}(a \given  y) = f_{\uvtheta}(b \given  y) = \alpha \qquad \text{and} \qquad a < \mathbb{E}^*(\uvtheta \given  y) < b.
$$
When interpreting the o.p.m.\ $\bar{P}_{\uvtheta}(\cdot \given  y)$, characterised by $\bar{P}_{\uvtheta}(B \given  y) = \sup_{\theta \in B} f_{\uvtheta}(\theta \given  y)$, $B \subseteq \mathbb{R}$, as an upper bound for a subjective probability distribution $p_{\uvtheta}(\cdot \given y)$, we can deduce that $p_{\uvtheta}([a,b] \given y) \geq 1 - \alpha$ from the fact that $\bar{P}_{\uvtheta}([a,b]^{\mathrm{c}} \given y) = \alpha$. As is usual for LRTs, we use the asymptotic properties of $\lambda(\uvy_{1:n})$ to make the computation of $c$ feasible.

\begin{thm}
\label{thm:asymptotic_LRT_unknown}
Under Assumptions~\ref{it:identifiable}-\ref{it:bounded2}, it holds that $-2 \log \lambda(\uvy_{1:n}) \xrightarrow{o.p.m.} \overline{\chi}^2(0, \beta)$ with $\beta = \mathbb{V}^*( s_{\theta_0} (\uvy))/\mathcal{I}^*(\theta_0)$.
\end{thm}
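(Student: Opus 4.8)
The plan is to follow the classical route for the asymptotics of a likelihood-ratio-type statistic: express $-2\log\lambda(\uvy_{1:n})$ as a quadratic form in $\uvtheta^*_n-\theta_0$ and then feed in the asymptotic normality of the MAP from Theorem~\ref{thm:asympNormUnknownSamp} together with the fact that the square of a normal possibility function is a chi-squared possibility function (Proposition~\ref{res:chiSquared}). Write $\ell^{\mathrm{p}}_n(\theta)=\ell(\theta;\uvy_{1:n})+\log f_{\uvtheta}(\theta)$, so that $\uvtheta^*_n=\argmax_{\theta}\ell^{\mathrm{p}}_n(\theta)$ and $-2\log\lambda(\uvy_{1:n})=2\big(\ell^{\mathrm{p}}_n(\uvtheta^*_n)-\ell^{\mathrm{p}}_n(\theta_0)\big)$. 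Since $\uvtheta^*_n$ is a stationary point of $\ell^{\mathrm{p}}_n$, a second-order Taylor expansion around $\uvtheta^*_n$ removes the first-order term and gives
\begin{equation*}
-2\log\lambda(\uvy_{1:n}) = \mathcal{J}^*_n\,(\uvtheta^*_n-\theta_0)^2 + R_n ,
\end{equation*}
where $R_n$ is a cubic remainder and where $\mathcal{J}^*_n/n$ and $-\tfrac1n\partial^2_\theta\ell^{\mathrm{p}}_n(\uvtheta^*_n)$ share the same o.p.m.\ limit (the evaluation point $\hat\theta_n$ versus $\uvtheta^*_n$ and the prior contribution to the curvature being asymptotically negligible after division by $n$). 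This identity is essentially the Bernstein--von Mises statement (Theorem~\ref{res:BvM}) read at $\theta=\theta_0$: evaluating the approximation $\overline{\mathrm{N}}(\theta;\theta_0+\Delta_n/\sqrt{n},1/\mathcal{J}^*_n)$ at $\theta_0$ and recalling that its mode is $\uvtheta^*_n$ yields $\lambda(\uvy_{1:n})\approx\exp\big(-\tfrac12\mathcal{J}^*_n(\uvtheta^*_n-\theta_0)^2\big)$, so the same expansion that proves Theorem~\ref{res:BvM} delivers the display above with a controlled remainder; the prior factor only contributes a term converging to $0$ by Assumption~\ref{it:priorDiff} and consistency.

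I would then treat the two factors separately. For the curvature, after the (harmless) replacement of $\hat\theta_n$ by $\theta_0$ one has $\mathcal{J}^*_n=\sum_{i=1}^n\big(-\partial^2_\theta\log f_{\uvy|\uvtheta}(\uvy_i\given\theta_0)\big)$, so the law of large numbers for uncertain variables (Theorem~\ref{thm:LLN}) applied to this independently-described i.i.d.\ sequence --- whose common expected value is $\mathcal{I}^*(\theta_0)$ by \eqref{eq:Fisher} and the invariance of $\mathbb{E}^*$ --- gives $\mathcal{J}^*_n/n\xrightarrow{o.p.m.}\mathcal{I}^*(\theta_0)$. For the MAP, Theorem~\ref{thm:asympNormUnknownSamp} gives $\sqrt{n}(\uvtheta^*_n-\theta_0)\xrightarrow{o.p.m.}\overline{\mathrm{N}}\big(0,\mathbb{V}^*(s_{\theta_0}(\uvy))/\mathcal{I}^*(\theta_0)^2\big)$. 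Rewriting $-2\log\lambda(\uvy_{1:n})=(\mathcal{J}^*_n/n)\big(\sqrt{n}(\uvtheta^*_n-\theta_0)\big)^2+R_n$ and invoking a Slutsky/continuous-mapping statement for $\xrightarrow{o.p.m.}$ --- available because convergence in o.p.m.\ is pointwise convergence of possibility functions and the maps in question (squaring, multiplication by a constant) act on those functions through the elementary change-of-variable formula \eqref{eq:changeOfVariable} --- the leading term converges in o.p.m.\ to $\mathcal{I}^*(\theta_0)$ times the uncertain variable obtained by squaring one described by $\overline{\mathrm{N}}\big(0,\mathbb{V}^*(s_{\theta_0}(\uvy))/\mathcal{I}^*(\theta_0)^2\big)$. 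By Proposition~\ref{res:chiSquared} with $n=1$ and zero mean, that square is a chi-squared possibility function of the form $\overline{\chi}^2(0,\cdot)$, and multiplying by the constant $\mathcal{I}^*(\theta_0)$ rescales its second parameter through the property $\alpha\uvtheta\sim\overline{\chi}^2(\alpha\mu,\alpha\beta)$; collecting these constants in the parametrisation \eqref{eq:chiSquaredPossibility} produces the stated value $\beta=\mathbb{V}^*(s_{\theta_0}(\uvy))/\mathcal{I}^*(\theta_0)$.

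The main obstacle is controlling the cubic remainder $R_n$ in the o.p.m.\ sense, so that the limit can be transferred from the leading quadratic term to $-2\log\lambda(\uvy_{1:n})$. Writing $R_n$ as a multiple of $\partial^3_\theta\ell(\tilde\theta_n;\uvy_{1:n})(\uvtheta^*_n-\theta_0)^3$ for some $\tilde\theta_n$ between $\theta_0$ and $\uvtheta^*_n$, Assumption~\ref{it:bounded2} gives $\partial^3_\theta\ell=O(n)$ while Theorem~\ref{thm:asympNormUnknownSamp} gives $\uvtheta^*_n-\theta_0=O(n^{-1/2})$ in o.p.m., so $R_n=O(n^{-1/2})$ and hence vanishes in o.p.m.; making this rigorous needs the notion of domination recalled before Assumption~\ref{it:logConcave2} together with the fact, immediate from \eqref{eq:convOpm}, that adding a term that tends to $0$ in o.p.m.\ does not change an o.p.m.\ limit. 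A secondary technical point, already required inside Theorem~\ref{thm:asympNormUnknownSamp} but worth re-verifying, is that the pushforward of $f_{\uvy}$ under the (strictly monotone, hence invertible) score $s_{\theta_0}$ meets the hypotheses of the CLT for uncertain variables (Theorem~\ref{thm:CLT}): twice-differentiability follows from the smoothness of $L$ and of $s_{\theta_0}^{-1}$, whereas strict log-concavity has to be deduced from Assumption~\ref{it:logConcave2}. Once these points are settled, the identities above assemble into $-2\log\lambda(\uvy_{1:n})\xrightarrow{o.p.m.}\overline{\chi}^2(0,\beta)$.
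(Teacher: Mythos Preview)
Your proposal is correct and follows essentially the same route as the paper: read the Bernstein--von Mises approximation at $\theta=\theta_0$ to obtain $-2\log\lambda(\uvy_{1:n})\approx \tfrac{1}{n}\mathcal{J}^*_n\,\Delta_n^2$, apply the LLN for uncertain variables to the curvature and the CLT for uncertain variables to the score, combine via Slutsky, and identify the limit through Proposition~\ref{res:chiSquared}. The only cosmetic difference is that the paper expresses the quadratic directly in terms of $\Delta_n$ and the raw score sum, whereas you package the fluctuation step via Theorem~\ref{thm:asympNormUnknownSamp} and the identity $\sqrt{n}(\uvtheta^*_n-\theta_0)\approx\Delta_n$; since Theorem~\ref{thm:asympNormUnknownSamp} is itself proved from exactly those two convergence results, the two arguments are equivalent.
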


Using Theorem~\ref{thm:asymptotic_LRT_unknown}, it is easy to approximate the credibility of the event $\lambda(\uvy_{1:n}) \leq c$ for large values of $n$ and deduce a threshold $c$ for a given confidence level $\alpha$.

\subsection{Numerical results: ratio of two means}

Let $(Y, Y'), (Y_1,Y'_1), (Y_2,Y'_2), \dots$ be a collection of i.i.d.\ random variables sampled from an unknown distribution with $Y$ and $Y'$ real-valued and independent. Consider the objective of estimating the ratio between $\mathbb{E}(Y)$ and $\mathbb{E}(Y')$. The most natural estimator for such a quantity is $\hat{\theta}_n = \bar{Y}_n / \bar{Y}'_n$ with $\bar{Y}_n = n^{-1} \sum_{i=1}^n Y_i$ and $\bar{Y}'_n = n^{-1} \sum_{i=1}^n Y'_i$ the sample averages. In order to apply the proposed approach, we must identify likelihoods with a suitable MLEs in order to learn both $\mathbb{E}(Y)$ and $\mathbb{E}(Y')$. The normal distribution is a suitable likelihood in this case, however, since the variances of these observations are not known either, they must also be estimated. We therefore consider the likelihoods $\mathrm{N}(\bm{\mu}, \bm{\tau}^{-1})$ and $\mathrm{N}(\bm{\mu}', \bm{\tau}'^{-1})$ where both the means and the precisions are uncertain variables to be determined. We now focus on the prior possibility functions for $\bm{\mu}$ and $\bm{\tau}$ since the ones for $\bm{\mu}'$ and $\bm{\tau}'$ will be of the same form. The prior possibility function $f_{\bm{\tau}}$ describing $\bm{\tau}$ is assumed to be the gamma possibility function $\overline{\mathrm{G}}(\alpha_0, \beta_0)$ for some given $\alpha_0,\beta_0 \geq 0$. The prior possibility function $f_{\bm{\mu}}(\cdot \given \tau)$ for $\bm{\mu}$ given $\bm{\tau} = \tau$ is assumed to be the normal possibility function $\overline{\mathrm{N}}(\mu_0, (k\tau)^{-1})$ for some given $\mu_0 \in \mathbb{R}$ and $k \geq 0$. The corresponding parameters for $\bm{\mu}'$ and $\bm{\tau}'$ are denoted by $\alpha'_0$, $\beta'_0$, $\mu'_0$ and $k'$. The posterior possibility functions describing $\bm{\mu}$ and $\bm{\tau}$ are of the same form as the priors, i.e.\
$$
f_{\bm{\tau}}(\tau \given y_{1:n}) = \overline{\mathrm{G}}(\tau; \alpha_n, \beta_n) \qquad \text{and} \qquad f_{\bm{\mu}}(\mu \given \tau, y_{1:n}) = \overline{\mathrm{N}}(\mu; \mu_n, ((k + n)\tau)^{-1})
$$
with $\alpha_n = \alpha_0 + n/2$, $\mu_n = (k\mu_0 + n \bar{y}_n)/(k + n)$ and
$$
\beta_n = \beta_0 + \dfrac{n\hat{v}_n}{2} + \dfrac{nk}{2(n+k)}(\mu_0 - \bar{y}_n)^2
$$
where $\hat{v}_n = n^{-1}\sum_{i=1}^n (y_i - \bar{y}_n)^2$ is the sample variance. These results are consistent with the probabilistic case as detailed, e.g., in \citep{GCSDVR13}. The marginal possibility function describing $\bm{\mu}$ given $y_{1:n}$ is
$$
f_{\bm{\mu}}(\mu \given y_{1:n}) = \overline{\mathrm{St}}(\mu; \nu_n, \mu_n, s_n),
$$
with $\nu_n = 2\alpha_n$ degrees of freedom and variance $s_n = \beta_n/(\alpha_n(k+n))$. The possibility function describing $\bm{r} \doteq \bm{\mu}/\bm{\mu}'$ given $y_{1:n}$ is
$$
f_{\bm{r}}(r \given y_{1:n}) = \sup \big\{ f_{\bm{\mu}}(\mu \given y_{1:n}) f_{\bm{\mu}'}(\mu' \given y_{1:n}) : \mu/\mu' = r \big\}.
$$
While this expression does not seem to have a simple closed-form expression, it is straightforward to compute numerically using any optimisation algorithm. Using the properties of the expected value $\mathbb{E}^*(\cdot)$, we find that
$$
\mathbb{E}^*( \bm{r} \given y_{1:n}) = \dfrac{\mathbb{E}^*( \bm{\mu} \given y_{1:n})}{\mathbb{E}^*( \bm{\mu'} \given y_{1:n})} = \dfrac{\mu_n}{\mu'_n}.
$$ 
Considering the simplified case where $k = k' = \alpha_0 = \alpha'_0 = \beta_0 = \beta'_0 = 0$, i.e.\ the case where there is no prior information on either of $\bm{\mu}$, $\bm{\mu}'$, $\bm{\tau}$ and $\bm{\tau}'$ (which would be improper for random variables), we recover the estimate $\mathbb{E}^*(\bm{r} \given y_{1:n}) = \bar{y}_n / \bar{y}'_n$. In this case, it holds that $\alpha_n = n/2$ and $\beta_n = n\hat{v}_n/2$, so that the expected value of $\bm{\tau}$ is $\alpha_n/\beta_n = 1/\hat{v}_n$. Because of the properties of the expected value we have $\mathbb{E}^*(\bm{\tau}^{-1}) = \hat{v}_n$. With the probabilistic modelling, the inverse of the expected value of the precision ($\alpha_n/\beta_n = 1/\hat{v}_n$) would not be equal to the expected value of the variance ($\beta_n/(\alpha_n-1) = n\hat{v}_n/(n-2)$). Although the proposed approach does not yield an improved estimator in the case where the prior is uninformative, a quantification of the posterior uncertainty is now available. We also consider the case where the variances of $Y$ and $Y'$ are known, in which case $\bm{r}$ is simply a ratio of normally-described uncertain variables. We refer to this simpler model as the ``normal model'' and use it to show that the results are not specific to the Students model introduced above.

We illustrate the form of the posterior possibility function $f_{\bm{r}}(\cdot \given y_{1:n})$ in the case where $Y$ is a $\mathrm{N}(1, 1)$-distributed random variable and where $Y'$ is independent of $Y$ and is $\mathrm{N}(1/100, 1/100)$-distributed. This is a difficult case since $Y'$ is close to $0$ with a standard deviation allowing values of either signs. In order to show how the posterior possibility function $f_{\bm{r}}(\cdot \given y_{1:n})$ depends on $n$ and on the specific realisations $y_{1:n}$, two different values of $n$ and $1000$ realisations of the observations are simulated and the mean and standard deviation of the corresponding posterior possibility function $f_{\bm{r}}(\cdot \given y_{1:n})$ is displayed in Figure~\ref{fig:posteriorPossibilityRatio} together with the mean MAP for the Students model as well as for the normal model. This type of possibility function cannot be renormalised to a proper probability distribution since it is not integrable. Indeed, large values of $\bm{r}$ cannot be excluded since the credibility of the event $\bm{\mu}' = 0$ is positive for finite values of $n$.

\begin{figure}
\centering
    \begin{subfigure}[b]{0.49\textwidth}
      \includegraphics[width=\textwidth,trim=10pt 0pt 35pt 15pt,clip]{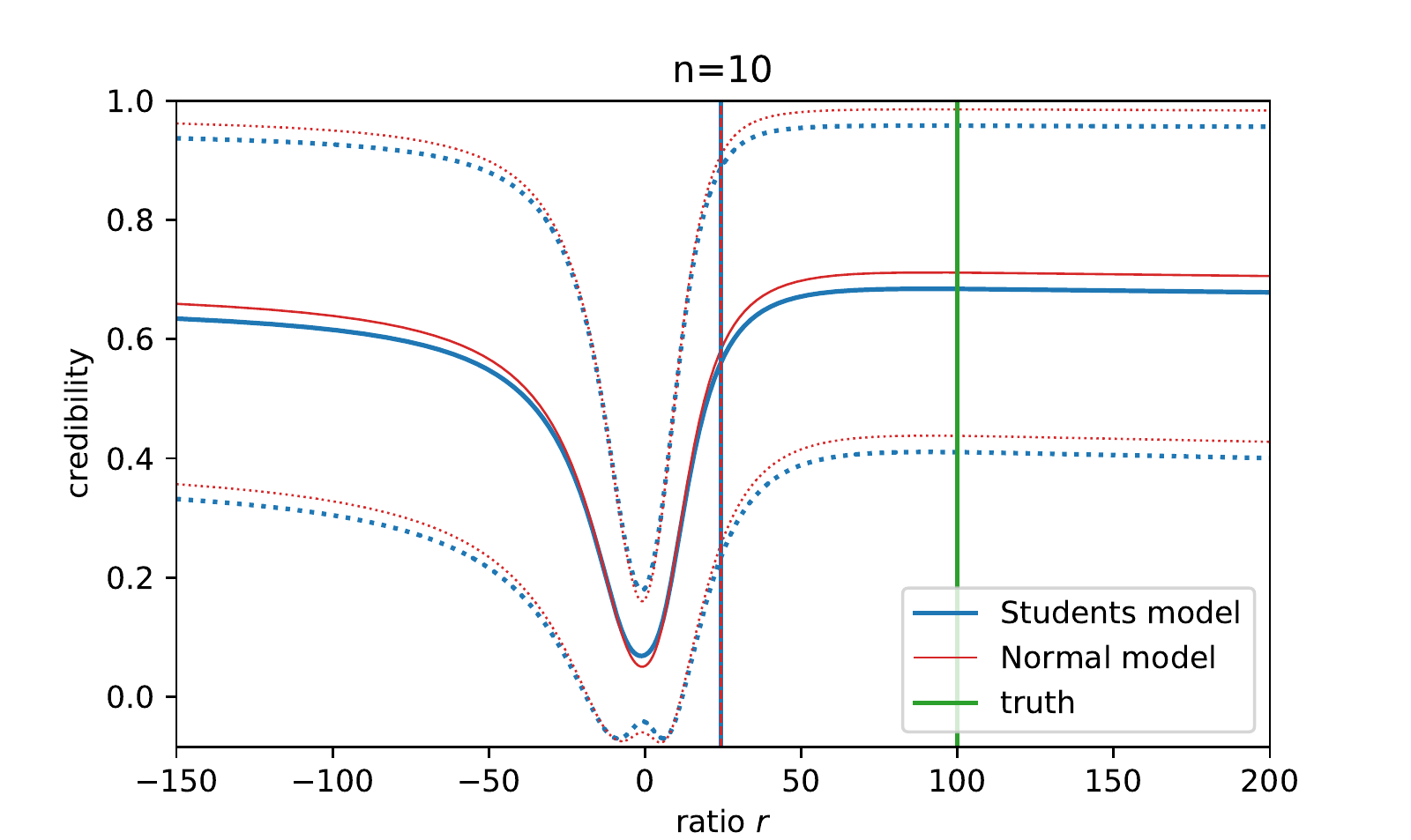}
    \end{subfigure} 
    \begin{subfigure}[b]{0.49\textwidth}
      \includegraphics[width=\textwidth,trim=10pt 0pt 35pt 15pt,clip]{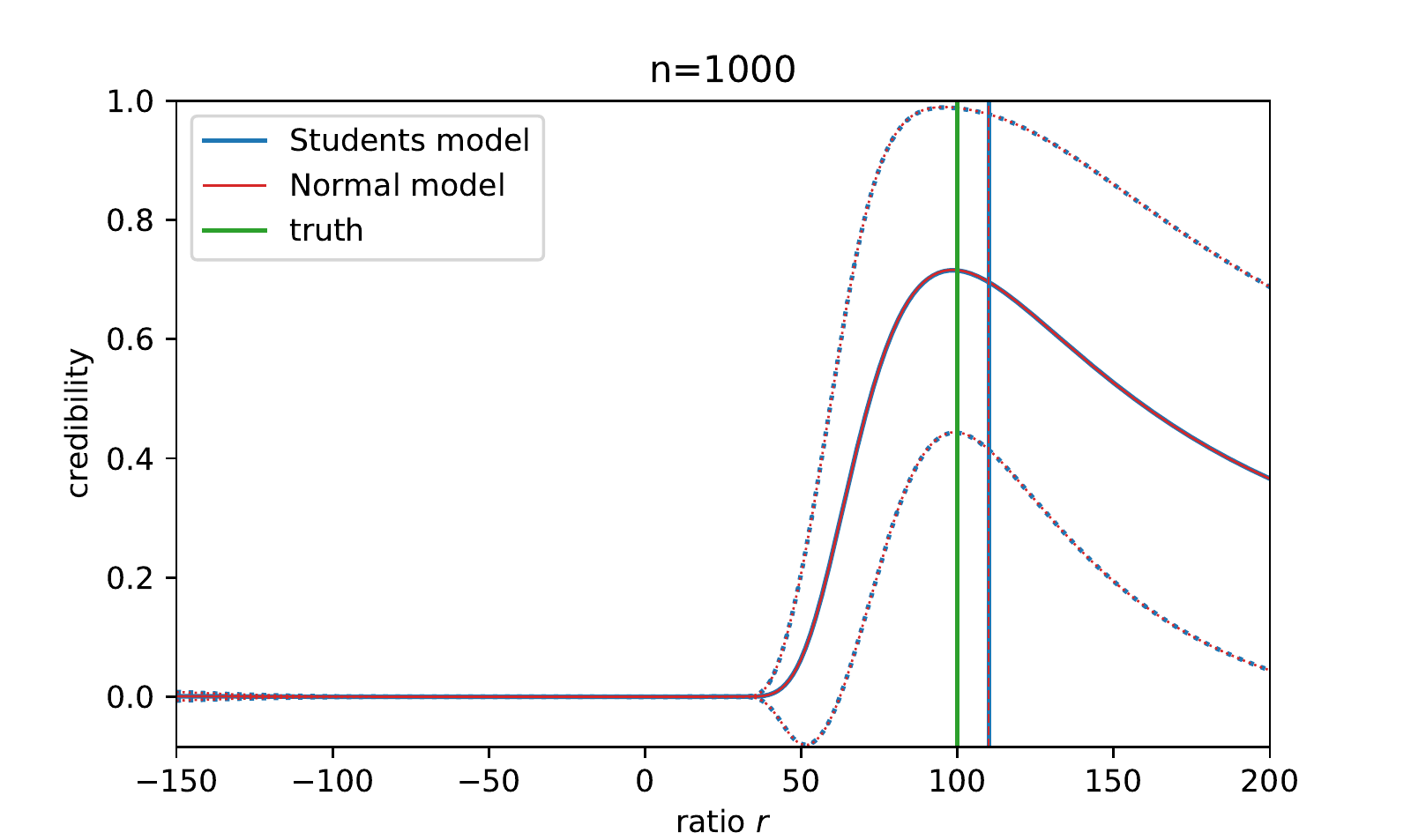}
    \end{subfigure}
\caption{Averaged posterior possibility function $f_{\bm{r}}(\cdot \given y_{1:n})$ (solid lines) $\pm$ one standard deviation (dashed lines) for a varying number of observations and averaged MAP (blue/red vertical lines) for the Students model and the normal model.}
\label{fig:posteriorPossibilityRatio}
\end{figure}

%%%%%%%%%%%%
\section{Conclusion}

Through a number of practical and theoretical results, we have demonstrated how possibility functions can be used for statistical modelling in place of, or together with, probability distributions. Many features of standard probabilistic inference are retained when using possibility functions and the associated uncertain variables. The differences between the two approaches highlight the suitability of each representation of uncertainty for specific tasks. For instance, statistical modelling based on possibility functions enables the representation of a complete absence of prior information about some or all of the parameters of the model while avoiding the issue of improper posterior distributions. Crucially, although the influence of the information in the prior representation of uncertainty vanishes as the number of data points increases, the nature of the prior as a probability distribution or as a possibility function remains in the limit and influences the posterior quantification of uncertainty.

An interesting direction to take from this work is to develop an outer-measure approach in an infinite-dimensional setting. By doing so one can consider different avenues for uncertainty quantification, with applications in inverse problems and compressed sensing \citep{HGS12}. In particular, due to the recent success of the Bayesian approach \citep{DLSV13,EHNR96,AMS10} for inverse problems one could use this as an instinctive application. Another direction could be the derivation of asymptotic properties for dynamical systems \citep{BRR98,MMNP15} in the context of o.p.m., which is not considered in the existing work on this topic \citep{JH18b,HB18}.

\bibliographystyle{abbrvnat}
\bibliography{outer_MLE}

\begin{thebibliography}{51}
\providecommand{\natexlab}[1]{#1}
\providecommand{\url}[1]{\texttt{#1}}
\expandafter\ifx\csname urlstyle\endcsname\relax
  \providecommand{\doi}[1]{doi: #1}\else
  \providecommand{\doi}{doi: \begingroup \urlstyle{rm}\Url}\fi

\bibitem[Aldrich(1997)]{JA97}
J.~Aldrich.
\newblock R. {A}. {F}isher and the making of maximum likelihood 1912--1922.
\newblock \emph{Statistical science}, 12\penalty0 (3):\penalty0 162--176, 1997.

\bibitem[Berger and Casella(2001)]{CB01}
R.~L. Berger and G.~Casella.
\newblock \emph{Statistical inference}.
\newblock Duxbury, 2001.

\bibitem[Bickel et~al.(1998)Bickel, Ritov, and Ryden]{BRR98}
P.~J. Bickel, Y.~Ritov, and T.~Ryden.
\newblock Asymptotic normality of the maximum-likelihood estimator for general
  hidden {M}arkov models.
\newblock \emph{The Annals of Statistics}, 26\penalty0 (4):\penalty0
  1614--1635, 1998.

\bibitem[Bissiri et~al.(2016)Bissiri, Holmes, and Walker]{BHW16}
P.~G. Bissiri, C.~C. Holmes, and S.~G. Walker.
\newblock A general framework for updating belief distributions.
\newblock \emph{Journal of the Royal Statistical Society: Series B},
  78\penalty0 (5):\penalty0 1103--1130, 2016.

\bibitem[Butkovi{\v{c}}(2010)]{PB10}
P.~Butkovi{\v{c}}.
\newblock \emph{Max-linear systems: theory and algorithms}.
\newblock Springer Science \& Business Media, 2010.

\bibitem[Carlsson and Full{\'e}r(2001)]{CF01}
C.~Carlsson and R.~Full{\'e}r.
\newblock On possibilistic mean value and variance of fuzzy numbers.
\newblock \emph{Fuzzy sets and systems}, 122\penalty0 (2):\penalty0 315--326,
  2001.

\bibitem[Chen(1995)]{YYC95}
Y.~Y. Chen.
\newblock Statistical inference based on the possibility and belief measures.
\newblock \emph{Transactions of the American Mathematical Society},
  347\penalty0 (5):\penalty0 1855--1863, 1995.

\bibitem[Constantinou and Dawid(2017)]{CD17}
P.~Constantinou and A.~P. Dawid.
\newblock Extended conditional independence and applications in causal
  inference.
\newblock \emph{The Annals of Statistics}, 45\penalty0 (6):\penalty0
  2618--2653, 2017.

\bibitem[Dashti et~al.(2013)Dashti, Law, Stuart, and Voss]{DLSV13}
M.~Dashti, K.~J.~H. Law, A.~M. Stuart, and J.~Voss.
\newblock {MAP} estimators and their consistency in {B}ayesian nonparametric
  inverse problems.
\newblock \emph{Inverse Problems}, 29\penalty0 (9):\penalty0 095017, 2013.

\bibitem[De~Baets et~al.(1999)De~Baets, Tsiporkova, and Mesiar]{dBTM99}
B.~De~Baets, E.~Tsiporkova, and R.~Mesiar.
\newblock Conditioning in possibility theory with strict order norms.
\newblock \emph{Fuzzy Sets and Systems}, 106\penalty0 (2):\penalty0 221--229,
  1999.

\bibitem[Del~Moral and Doisy(1999)]{DMD99}
P.~Del~Moral and M.~Doisy.
\newblock Maslov idempotent probability calculus, i.
\newblock \emph{Theory of Probability \& Its Applications}, 43\penalty0
  (4):\penalty0 562--576, 1999.

\bibitem[Del~Moral and Doisy(2000)]{DMD00}
P.~Del~Moral and M.~Doisy.
\newblock Maslov idempotent probability calculus. ii.
\newblock \emph{Theory of Probability \& Its Applications}, 44\penalty0
  (2):\penalty0 319--332, 2000.

\bibitem[Dempster(1968)]{APD68}
A.~P. Dempster.
\newblock A generalization of bayesian inference.
\newblock \emph{Journal of the Royal Statistical Society: Series B
  (Methodological)}, 30\penalty0 (2):\penalty0 205--232, 1968.

\bibitem[Druilhet and Marin(2007)]{DM07}
P.~Druilhet and J.-M. Marin.
\newblock Invariant {HPD} credible sets and {MAP} estimators.
\newblock \emph{Bayesian Analysis}, 2\penalty0 (4):\penalty0 681--691, 2007.

\bibitem[Dubois and Prade(1981)]{DP81}
D.~Dubois and H.~Prade.
\newblock Additions of interactive fuzzy numbers.
\newblock \emph{IEEE Transactions on Automatic Control}, 26\penalty0
  (4):\penalty0 926--936, 1981.

\bibitem[Dubois and Prade(2015)]{DP15}
D.~Dubois and H.~Prade.
\newblock Possibility theory and its applications: Where do we stand?
\newblock In \emph{Springer Handbook of Computational Intelligence}, pages
  31--60. Springer, 2015.

\bibitem[Dubois et~al.(1995)Dubois, Moral, and Prade]{DMP95}
D.~Dubois, S.~Moral, and H.~Prade.
\newblock A semantics for possibility theory based on likelihoods.
\newblock In \emph{Proceedings of 1995 IEEE International Conference on Fuzzy
  Systems}, volume~3, pages 1597--1604. IEEE, 1995.

\bibitem[Engl et~al.(1996)Engl, Hanke, and Neubauer]{EHNR96}
H.~W. Engl, M.~Hanke, and A.~Neubauer.
\newblock \emph{Regularization of inverse problems}, volume 375.
\newblock Springer Science \& Business Media, 1996.

\bibitem[Evans and Moshonov(2006)]{EM06}
M.~Evans and H.~Moshonov.
\newblock Checking for prior-data conflict.
\newblock \emph{Bayesian analysis}, 1\penalty0 (4):\penalty0 893--914, 2006.

\bibitem[Fisher(1922)]{RAF22}
R.~A. Fisher.
\newblock On the mathematical foundations of theoretical statistics.
\newblock \emph{Phil. Trans. R. Soc. Lond. A}, 222\penalty0 (594-604):\penalty0
  309--368, 1922.

\bibitem[Fisher(1935)]{RAF35}
R.~A. Fisher.
\newblock The fiducial argument in statistical inference.
\newblock \emph{Annals of eugenics}, 6\penalty0 (4):\penalty0 391--398, 1935.

\bibitem[Gelman et~al.(2013)Gelman, Carlin, Stern, Dunson, Vehtari, and
  Rubin]{GCSDVR13}
A.~Gelman, J.~B. Carlin, H.~S. Stern, D.~B. Dunson, A.~Vehtari, and D.~B.
  Rubin.
\newblock \emph{Bayesian data analysis}.
\newblock CRC press, 2013.

\bibitem[Godambe(1991)]{VPG91}
V.~P. Godambe.
\newblock \emph{Estimating functions}.
\newblock Oxford University Press, 1991.

\bibitem[Gr{\"u}nwald and Dawid(2004)]{GD04}
P.~D. Gr{\"u}nwald and A.~P. Dawid.
\newblock Game theory, maximum entropy, minimum discrepancy and robust bayesian
  decision theory.
\newblock \emph{the Annals of Statistics}, 32\penalty0 (4):\penalty0
  1367--1433, 2004.

\bibitem[Hald(1999)]{AH99}
A.~Hald.
\newblock On the history of maximum likelihood in relation to inverse
  probability and least squares.
\newblock \emph{Statistical Science}, 14\penalty0 (2):\penalty0 214--222, 1999.

\bibitem[Henn et~al.(2012)Henn, Gross, Scholze, Wurm, Elster, and
  B{\"a}r]{HGS12}
M.-A. Henn, H.~Gross, F.~Scholze, M.~Wurm, C.~Elster, and M.~B{\"a}r.
\newblock A maximum likelihood approach to the inverse problem of
  scatterometry.
\newblock \emph{Optics Express}, 20\penalty0 (12):\penalty0 12771--12786, 2012.

\bibitem[Houssineau(2018{\natexlab{a}})]{JH18}
J.~Houssineau.
\newblock Parameter estimation with a class of outer probability measures.
\newblock \emph{arXiv preprint arXiv:1801.00569}, 2018{\natexlab{a}}.

\bibitem[Houssineau(2018{\natexlab{b}})]{JH18b}
J.~Houssineau.
\newblock Detection and estimation of partially-observed dynamical systems: an
  outer-measure approach.
\newblock \emph{arXiv preprint arXiv:1801.00571}, 2018{\natexlab{b}}.

\bibitem[Houssineau and Bishop(2018)]{HB18}
J.~Houssineau and A.~N. Bishop.
\newblock Smoothing and filtering with a class of outer measures.
\newblock \emph{SIAM/ASA Journal on Uncertainty Quantification}, 6\penalty0
  (2):\penalty0 845--866, 2018.

\bibitem[Langford(2005)]{JL05}
J.~Langford.
\newblock Tutorial on practical prediction theory for classification.
\newblock \emph{Journal of machine learning research}, 6\penalty0
  (Mar):\penalty0 273--306, 2005.

\bibitem[Le~Cam(1990)]{LLC90}
L.~Le~Cam.
\newblock Maximum likelihood: an introduction.
\newblock \emph{International Statistical Review/Revue Internationale de
  Statistique}, pages 153--171, 1990.

\bibitem[Lehmann(2004)]{EL04}
E.~L. Lehmann.
\newblock \emph{Elements of large-sample theory}.
\newblock Springer Science \& Business Media, 2004.

\bibitem[Maccheroni et~al.(2005)Maccheroni, Marinacci, et~al.]{MM05}
F.~Maccheroni, M.~Marinacci, et~al.
\newblock A strong law of large numbers for capacities.
\newblock \emph{The Annals of Probability}, 33\penalty0 (3):\penalty0
  1171--1178, 2005.

\bibitem[Marinacci(1999)]{MM99}
M.~Marinacci.
\newblock Limit laws for non-additive probabilities and their frequentist
  interpretation.
\newblock \emph{Journal of Economic Theory}, 84\penalty0 (2):\penalty0
  145--195, 1999.

\bibitem[Maslov(1992)]{VM92}
V.~P. Maslov.
\newblock \emph{Idempotent analysis}, volume~13.
\newblock American Mathematical Soc., 1992.

\bibitem[McGoff et~al.(2015)McGoff, Mukherjee, Nobel, Pillai, et~al.]{MMNP15}
K.~McGoff, S.~Mukherjee, A.~Nobel, N.~Pillai, et~al.
\newblock Consistency of maximum likelihood estimation for some dynamical
  systems.
\newblock \emph{The Annals of Statistics}, 43\penalty0 (1):\penalty0 1--29,
  2015.

\bibitem[Murphy and Van~der Vaart(2000)]{MV00}
S.~A. Murphy and A.~W. Van~der Vaart.
\newblock On profile likelihood.
\newblock \emph{Journal of the American Statistical Association}, 95\penalty0
  (450):\penalty0 449--465, 2000.

\bibitem[Raue et~al.(2009)Raue, Kreutz, Maiwald, Bachmann, Schilling,
  Klingm{\"u}ller, and Timmer]{RKM09}
A.~Raue, C.~Kreutz, T.~Maiwald, J.~Bachmann, M.~Schilling, U.~Klingm{\"u}ller,
  and J.~Timmer.
\newblock Structural and practical identifiability analysis of partially
  observed dynamical models by exploiting the profile likelihood.
\newblock \emph{Bioinformatics}, 25\penalty0 (15):\penalty0 1923--1929, 2009.

\bibitem[Robert and Casella(2013)]{RC13}
C.~Robert and G.~Casella.
\newblock \emph{Monte {C}arlo statistical methods}.
\newblock Springer Science \& Business Media, 2013.

\bibitem[Shackle(1961)]{GS61}
G.~L.~S. Shackle.
\newblock \emph{Decision order and time in human affairs}.
\newblock Cambridge University Press, 1961.

\bibitem[Shafer(1976)]{GS76}
G.~Shafer.
\newblock \emph{A mathematical theory of evidence}, volume~42.
\newblock Princeton University Press, 1976.

\bibitem[Smets and Kennes(1994)]{PS94}
P.~Smets and R.~Kennes.
\newblock The transferable belief model.
\newblock \emph{Artificial intelligence}, 66\penalty0 (2):\penalty0 191--234,
  1994.

\bibitem[Smith(2010)]{JQS10}
J.~Q. Smith.
\newblock \emph{Bayesian decision analysis: principles and practice}.
\newblock Cambridge University Press, 2010.

\bibitem[Stuart(2010)]{AMS10}
A.~M. Stuart.
\newblock Inverse problems: a {B}ayesian perspective.
\newblock \emph{Acta Numerica}, 19:\penalty0 451--559, 2010.

\bibitem[Ter{\'a}n(2014)]{PT14}
P.~Ter{\'a}n.
\newblock Law of large numbers for the possibilistic mean value.
\newblock \emph{Fuzzy Sets and Systems}, 245:\penalty0 116--124, 2014.

\bibitem[Vanlier et~al.(2012)Vanlier, Tiemann, Hilbers, and van Riel]{VTHR12}
J.~Vanlier, C.~A. Tiemann, P.~A.~J. Hilbers, and N.~A.~W. van Riel.
\newblock An integrated strategy for prediction uncertainty analysis.
\newblock \emph{Bioinformatics}, 28\penalty0 (8):\penalty0 1130--1135, 2012.

\bibitem[Wald(1949)]{Aw49}
A.~Wald.
\newblock Note on the consistency of the maximum likelihood estimate.
\newblock \emph{The Annals of Mathematical Statistics}, 20\penalty0
  (4):\penalty0 595--601, 1949.

\bibitem[Walley(1991)]{PW91}
P.~Walley.
\newblock \emph{Statistical reasoning with imprecise probabilities}.
\newblock Chapman \& Hall, 1991.

\bibitem[Walley and Moral(1999)]{WM99}
P.~Walley and S.~Moral.
\newblock Upper probabilities based only on the likelihood function.
\newblock \emph{Journal of the Royal Statistical Society: Series B (Statistical
  Methodology)}, 61\penalty0 (4):\penalty0 831--847, 1999.

\bibitem[Wedderburn(1974)]{RWMW74}
R.~W. Wedderburn.
\newblock Quasi-likelihood functions, generalized linear models, and the
  {G}auss-{N}ewton method.
\newblock \emph{Biometrika}, 61\penalty0 (3):\penalty0 439--447, 1974.

\bibitem[Zadeh(1978)]{LZ78}
L.~A. Zadeh.
\newblock Fuzzy sets as a basis for a theory of possibility.
\newblock \emph{Fuzzy sets and systems}, 1\penalty0 (1):\penalty0 3--28, 1978.

\end{thebibliography}

\appendix

\section{Additional results and proofs for Section~\ref{sec:CLT}}
\label{sec:proofsLLN}

\subsection{Expected value}

Let $\uvx$ and $\uvz$ be uncertain variables on $\mathcal{X}$ and $\mathcal{Z}$ respectively, jointly described by a possibility function $f_{\uvx,\uvz}$ on $\mathcal{X} \times \mathcal{Z}$. Let $\bar{\mathbb{P}}$ be the o.p.m.\ on $\Omega_{\mathrm{u}}$ induced by $f_{\uvx,\uvz}$. The conditional o.p.m.\ $\bar{\mathbb{P}}( \cdot \given \uvz = z)$ is characterised by
\begin{equation*}
\bar{\mathbb{P}}( A \given \uvz = z) = \dfrac{\bar{\mathbb{P}}(A \cap \{\uvz = z\})}{\bar{\mathbb{P}}(\{\uvz = z\})},
\end{equation*}
for any $A \subseteq \Omega_{\mathrm{u}}$ and any $z \in \mathcal{Z}$ such that $\bar{\mathbb{P}}(\{\uvz = z\}) > 0$. A conditional version $\mathbb{E}^*( \cdot \given \uvz = z)$ of the expectation $\mathbb{E}^*(\cdot)$ can then be introduced based on $\bar{\mathbb{P}}( \cdot \given \uvz = z)$ as follows:
\begin{equation*}
\mathbb{E}^*(\uvx \given \uvz = z) = \uvx\Big( \argmax_{\omega \in \Omega_{\mathrm{u}}} \bar{\mathbb{P}}( \omega \given \uvz = z) \Big).
\end{equation*}
We will write $\mathbb{E}^*(\uvx \given z)$ when there is no ambiguity.

\begin{prop}
\label{res:surjectiveTransformArgmax}
Let $\uvx$ be an uncertain variable on a set $\mathcal{X}$ and let $\xi$ be a surjective function from a set $\mathcal{Z}$ to $\mathcal{X}$. It holds that
\begin{equation*}
\argmax_{x \in \mathcal{X}} f_{\uvx}(x) = \xi\bigg( \argmax_{z \in \mathcal{Z}} f_{\uvx}(\xi(z)) \bigg).
\end{equation*}
\end{prop}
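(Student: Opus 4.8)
The plan is to reduce the statement to the single observation that surjectivity of $\xi$ forces $\sup_{z \in \mathcal{Z}} f_{\uvx}(\xi(z)) = \sup_{x \in \mathcal{X}} f_{\uvx}(x)$, and then to verify the claimed set identity by a direct double inclusion. Write $g = f_{\uvx} \circ \xi$ and let $M = \sup_{x \in \mathcal{X}} f_{\uvx}(x)$ (here $M = 1$ for a possibility function, but the argument does not use this). Since the range of $\xi$ is all of $\mathcal{X}$, the set of values taken by $g$ on $\mathcal{Z}$ equals the set of values taken by $f_{\uvx}$ on $\mathcal{X}$, whence $\sup_{z} g(z) = M$ as well. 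Interpreting $\argmax$ as the (possibly empty) set of points at which the supremum is attained, one has $\argmax_{x} f_{\uvx}(x) = \{x \in \mathcal{X} : f_{\uvx}(x) = M\}$ and $\argmax_{z} g(z) = \{z \in \mathcal{Z} : g(z) = M\}$.

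For the inclusion $\supseteq$: if $x \in \xi(\argmax_z g(z))$, then $x = \xi(z_0)$ for some $z_0$ with $g(z_0) = M$, hence $f_{\uvx}(x) = f_{\uvx}(\xi(z_0)) = g(z_0) = M$, so $x \in \argmax_x f_{\uvx}(x)$. For the reverse inclusion $\subseteq$: if $f_{\uvx}(x_0) = M$, pick, using surjectivity, some $z_0$ with $\xi(z_0) = x_0$; then $g(z_0) = f_{\uvx}(x_0) = M = \sup_z g(z)$, so $z_0 \in \argmax_z g(z)$ and $x_0 = \xi(z_0) \in \xi(\argmax_z g(z))$. This closes the argument. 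If the supremum $M$ is not attained, both sides are empty and the identity holds trivially; in the cases of interest (e.g.\ $f_{\uvx}$ continuous with $f_{\uvx}(x) \to 0$ as $\norm{x}\to\infty$, or $f_{\uvx}$ a possibility function attaining the value $1$) the maximizing set is nonempty.

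The only step requiring any care — and it is a very mild one — is the equality $\sup_z g(z) = \sup_x f_{\uvx}(x)$: the bound $\leq$ is automatic, while $\geq$ is precisely where surjectivity enters, since otherwise the composition $f_{\uvx} \circ \xi$ could avoid the part of $\mathcal{X}$ on which $f_{\uvx}$ is large. Everything else is bookkeeping with the definition of $\argmax$; in particular no topological or smoothness hypotheses on $\mathcal{X}$, $\mathcal{Z}$, or $\xi$ are needed beyond surjectivity, which is consistent with the way this proposition is invoked to identify $\mathbb{E}^*(\uvx)$ with $\argmax f_{\uvx}$ after Definition~\ref{def:expectation}.
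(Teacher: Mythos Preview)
Your proof is correct and follows essentially the same route as the paper: both reduce the identity to the observation that surjectivity of $\xi$ makes the value set $\{f_{\uvx}(\xi(z)) : z \in \mathcal{Z}\}$ coincide with $\{f_{\uvx}(x) : x \in \mathcal{X}\}$, and then unpack the definition of $\argmax$. The paper writes this out as a direct set manipulation using the characterisation $\argmax_z g(z) = \{z : g(z) \geq g(z') \text{ for all } z'\}$, whereas you phrase it via the supremum $M$ and a double inclusion, but the content is identical.
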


\begin{proof}
By definition of the argmax it holds that
\begin{align*}
\xi\bigg( \argmax_{z \in \mathcal{Z}} f_{\uvx}(\xi(z)) \bigg) & = \xi\big( \{ z : f_{\uvx}(\xi(z)) \geq f_{\uvx}(\xi(z')), z' \in \mathcal{Z}  \} \big) \\
& = \{ \xi(z) : f_{\uvx}(\xi(z)) \geq f_{\uvx}(\xi(z')), z' \in \mathcal{Z}  \}.
\end{align*}
By defining $x = \xi(z)$ and by noticing that $\{f_{\uvx}(\xi(z')) : z' \in \mathcal{Z}\} = \{ f_{\uvx}(x') : x' \in \mathcal{X} \}$ holds from the fact that $\xi$ is surjective, it follows that
$$
\xi\bigg( \argmax_{z \in \mathcal{Z}} f_{\uvx}(\xi(z)) \bigg) = \{ x : f_{\uvx}(x) \geq f_{\uvx}(x'), x' \in \mathcal{X} \},
$$
which concludes the proof of the proposition.
\end{proof}

\subsection{Proofs of the results in Section~\ref{sec:CLT}}

\begin{proof}[Proof of Proposition~\ref{res:chiSquared}]
The possibility function $f_{\uvpsi}$ describing the uncertain variable $\uvpsi$ is characterised by
\begin{align*}
f_{\uvpsi}(\psi) & = \sup\bigg\{ \prod_{i=1}^n \overline{\mathrm{N}}(\theta_i;\mu_i,\sigma^2) : \theta_1^2 + \dots + \theta_n^2 = \psi \bigg\} \\
& = \sup\bigg\{  \exp\bigg(-\frac{1}{2\sigma^2}\bigg( \psi - 2 \sum_{i=1}^n \theta_i \mu_i + \bar{\mu} \bigg) \bigg) : \theta_1^2 + \dots + \theta_n^2 = \psi \bigg\},
\end{align*}
for any $\psi \geq 0$. If $\psi = 0$ then the result is trivial. To solve this maximization problem when $\psi > 0$, we define the Lagrange function
\begin{equation*}
\mathcal{L}(\theta_1,\dots,\theta_n,\lambda) = -\psi + 2 \sum_{i=1}^n \theta_i \mu_i - \bar{\mu} - \lambda \bigg(\sum_{i=1}^n \theta_i^2 - \psi \bigg),
\end{equation*}
which easily yields $\theta_i = \dfrac{\mu_i}{\nu}$, $i = 1,\dots,n$ and $\lambda = \sqrt{\bar{\mu}}/\sqrt{\psi}$, so that $\theta_i = \mu_i \sqrt{\psi}/\sqrt{\bar{\mu}}$ and
\begin{equation*}
f_{\uvpsi}(\psi) = \exp\Big(-\dfrac{1}{2\sigma^2} \big(\psi - 2\sqrt{\psi\bar{\mu}} + \bar{\mu} \big) \Big),
\end{equation*}
for any $\psi \geq 0$, which concludes the proof of the lemma.
\end{proof}

\begin{proof}[Proof of Theorem~\ref{thm:LLN}]
Before proceeding with the proof, we emphasize an abuse of notation where now $y$ denotes a point in $\mathcal{X} = \mathbb{R}^d$, unrelated to previous notations in the document. We also write $f$ instead of $f_{\uvx}$ for the sake of simplicity.
\begin{description}[wide]
  \item[Definitions]
  %\paragraph{\emph{Definitions}}
  We denote by $\mathcal{C}_f = \Conv(\argmax f)$ the convex hull of $\argmax f$, and by $d_{\mathcal{C}_f}$ the distance to the set $\mathcal{C}_f$, i.e. the function
  \begin{equation*}
	d_{\mathcal{C}_f }(x) = \inf_{y \in \mathcal{C}_f} \norm{x - y},\quad x \in \mathcal{X}.
  \end{equation*}
  Since $\mathcal{C}_f$ is convex, for any $x \in \mathcal{X}$, there exists a unique point $x^{\mathcal{C}_f} \in \mathcal{C}_f$ such that $d_{\mathcal{C}_f}(x) = \norm{x - x^{\mathcal{C}_f}}$. We also recall, from the definition of $\uvs_n$, that
  \begin{equation} \label{eq:fsn}
	f_{\uvs_n}(y) = \sup \left\{\prod_{i = 1}^n f(x_i) \GGiven  n^{-1}\sum_{i =1}^n x_i = y\right\},\quad n \in \mathbb{N}.
  \end{equation}
  %\\~
  \item[Outline]
  %\paragraph{\emph{Outline}}
  We aim to prove that
  \begin{equation*}
	\lim_{n \to \infty} f_{\uvs_n}(y) =
	\begin{dcases}
	  1,\quad&\textrm{if~}y \in \mathcal{C}_f,
	  \\
	  0,&\textrm{otherwise.}
	\end{dcases}
  \end{equation*}
  We will consider the two cases above separately.  
  %\\~ 
  \item[Case $y \in \mathcal{C}_f$:]
  The result being evident on $\argmax f$, let $y \in \mathcal{C}_f \setminus \argmax f$ be an arbitrary point on the convex hull $\mathcal{C}_f$ that does not belong to $\argmax f$.
  
  Using Carath\'{e}odory's theorem, $y$ can be written as the convex combination of at most $d + 1$ points of $\argmax f$, i.e., there exists $2 \leq p \leq d + 1$ such that
  \begin{equation*}
	y = \sum_{i = 1}^p c_i a_i,
  \end{equation*}
  where $c_i > 0$ and $a_i \in \argmax f$, $1 \leq i \leq p$, with $\sum_1^p c_i = 1$.

  For any $n \geq \max_i\{c_i^{-1}\} + 1$, we consider the sequence of points $(x_{n, i})_{i = 1}^n \in \mathcal{X}^n$ defined as
  \begin{equation*}
	x_{n, i} =
	\begin{dcases}
	  \dfrac{c_in}{\floor{c_in}}a_i, &\textrm{if}\quad \sum_{j = 1}^{p' - 1}\floor{c_jn} + 1 \leq i \leq \sum_{j = 1}^{p'}\floor{c_jn},\quad 1 \leq p' \leq p - 1,
	  \\
	  \dfrac{c_pn}{n - \sum_{j = 1}^{p - 1}\floor{c_jn}}a_p, &\textrm{if} \quad \sum_{j = 1}^{p - 1}\floor{c_jn} + 1 \leq i \leq n,
	\end{dcases}
  \end{equation*}
  and one can easily verify that $y = n^{-1}\sum_{i = 1}^n x_{n, i}$. We can also write
  \begin{subequations}
	\begin{align}
	  \prod_{i = 1}^n f(x_{n, i}) &= \prod_{i = 1}^{p - 1} f\left(\dfrac{c_in}{\floor{c_in}}a_i\right)^{\floor{c_in}} f\left(\dfrac{c_pn}{n - \sum_{j = 1}^{p - 1}\floor{c_jn}}a_p\right)^{n - \sum_{j = 1}^{p - 1}\floor{c_jn}} \;
	  \\
	  &= \prod_{i = 1}^p f\left(\left(1 + \dfrac{\alpha_{n, i}}{\beta_{n, i}}\right)a_i\right)^{\beta_{n, i}}, \label{eq:prod_f_xni_b}
	\end{align}
  \end{subequations}
  where $$\alpha_{n, i} = c_in - \floor{c_in} \in [0,1)\quad\textrm{and}\quad\beta_{n, i} = \floor{c_in}\geq c_in - 1,\quad 1 \leq i \leq p - 1,$$
  and $$\alpha_{n, p} = \sum_{j = 1}^{p - 1}(\floor{c_jn} - c_jn) \in (1 - p,0]\quad\textrm{and}\quad\beta_{n, p} = n - \sum_{j = 1}^{p - 1}\floor{c_jn} \geq c_pn,$$ so that $\lim_n \alpha_{n, i}/\beta_{n, i} = 0$ for any $1 \leq i \leq p$.
  
  For any $1 \leq i \leq p$, then, since $f$ attains its supremum value $1$ in $a_i$ and $f$ is $\mathcal{C}^2$ in some open neighbourhood of $a_i$, Taylor's theorem yields
  \begin{equation*}
	f\left(\left(1 + \dfrac{\alpha_{n, i}}{\beta_{n, i}}\right)a_i\right) = 1 + \dfrac{1}{2}\dfrac{\alpha_{n, i}^2}{\beta_{n, i}^2}a_i^{\mathrm{t}}H_f(a_i)a_i + o\left(\dfrac{1}{2}\dfrac{\alpha_{n, i}^2}{\beta_{n, i}^2}\|a_i\|^2\right),
  \end{equation*}
  where $H_f(a_i)$ is the Hessian matrix of $f$ in $a_i$. That is,
  \begin{align*}
	f\left(\left(1 + \dfrac{\alpha_{n, i}}{\beta_{n, i}}\right)a_i\right)^{\beta_{n, i}} \!\!\!\! &= \exp\left[\beta_{n, i}\log\left(1 + \dfrac{1}{2}\dfrac{\alpha_{n, i}^2}{\beta_{n, i}^2}a_i^{\mathrm{t}}H_f(a_i)a_i + o\left(\dfrac{1}{2}\dfrac{\alpha_{n, i}^2}{\beta_{n, i}^2}\|a_i\|^2\right)\right) \right]
	\\
	&\sim_n \exp\left[\dfrac{1}{2}\dfrac{\alpha_{n, i}^2}{\beta_{n, i}}a_i^{\mathrm{t}}H_f(a_i)a_i + o\left(\dfrac{1}{2}\dfrac{\alpha_{n, i}^2}{\beta_{n, i}}\|a_i\|^2\right) \right],
  \end{align*}
  so that $\lim_n f\left(\left(1 + \dfrac{\alpha_{n, i}}{\beta_{n, i}}\right)a_i\right)^{\beta_{n, i}} = 1$.
  
  From \eqref{eq:prod_f_xni_b} it holds that $\lim_n \prod_{i = 1}^n f(x_{n, i}) = 1$, and from \eqref{eq:fsn} if follows that $\lim_{n} f_{\uvs_n}(y) = 1$.
  %\\~ 
  \item[Case $y \notin \mathcal{C}_f$:]
  Let $y \in \mathcal{X} \setminus \mathcal{C}_f$ be an arbitrary point outside the convex hull $\mathcal{C}_f$, and let us denote by $\delta = d_{\mathcal{C}_f}(y) > 0$ its distance to $\mathcal{C}_f$. We define the open set
  \begin{equation*}
	B_0 = \{x \in \mathcal{X} \given  d_{\mathcal{C}_f}(x) < \delta/2\},
  \end{equation*}
  and the sequence of increasing closed sets $\{B_n\}_{n \in \mathbb{N}^*}$ as 
  \begin{equation*}
	B_n = \{x \in \mathcal{X} \given  \delta/2 \leq d_{\mathcal{C}_f}(x) \leq \delta(1 + \sqrt n )\},\quad n \in \mathbb{N}^*.
  \end{equation*}
  We define $b_n = \sup_{x \in B_n} f(x)$ and $\bar{b}_n = \sup_{x \in \mathcal{X} \setminus (B_n \cup B_0)} f(x)$, $n \in \mathbb{N}$, and we note that
  \begin{itemize}[label=-]
	\item Since $f$ is bounded and continuous and the sets $\{B_n\}_{n \in \mathbb{N}^*}$ are closed, the supremums $b_n$ are all reached and we have $0 \leq b_n < 1$ for $n \geq 1$. We define $b_{*} = \sup_{n \geq 1}b_n$ and we have $0 \leq b_{*} < 1$.
	\item Since $\lim_{\norm{x} \to \infty} f(x) = 0$, $\lim_{n \to \infty} \bar{b}_n = 0$.
  \end{itemize}
  Recall from \eqref{eq:fsn} that, for any $n \in \mathbb{N}$, we need to consider the sequences of points $x_{1:n}$ satisfying $n^{-1}\sum_{i =1}^n x_i = y$. We will focus first on the set $$Y_n = \{x_{1:n} \in \mathcal{X}^n \given  x_1,\dots,x_n \in B_n \cup B_0, n^{-1}\sum_{i =1}^n x_i = y\},$$ i.e., the admissible sequences whose points are all contained within $B_n \cup B_0$, and then on the set
$$
\bar{Y}_n = \{x_{1:n} \in \mathcal{X}^n \given  n^{-1}\sum_{i =1}^n x_i = y\} \setminus Y_n,
$$
i.e, those with a least one point in the remaining space $\mathcal{X}\setminus B_n \cup B_0$.
 
  Denote by $\hat n = \min_{x_{1:n} \in Y_n} \sum_{i = 1}^n \bm{1}_{B_n}(x_i)$ the minimum number of points in $B_n$ across every sequence in $Y_n$, and consider a sequence $\hat{x}_{1:n} \in Y_n$ with $\hat n$ points in $B_n$, indexed from $1$ to $\hat n$. Since $\hat{x}^{\mathcal{C}_f}_i \in \mathcal{C}_f$ for any $1 \leq i \leq n$ and $\mathcal{C}_f$ is convex, we have $\frac{1}{n} \sum_{i = 1}^n \hat{x}^{\mathcal{C}_f}_i \in \mathcal{C}_f$. We may then write
  \begin{align*}
	\delta &= d_{\mathcal{C}_f}(y)
	\\
	&\leq \norm{y - \dfrac{1}{n} \sum_{i = 1}^n \hat{x}^{\mathcal{C}_f}_i}
	\\
	&= \norm{\dfrac{1}{n} \sum_{i = 1}^n \hat{x}_i - \dfrac{1}{n} \sum_{i = 1}^n \hat{x}^{\mathcal{C}_f}_i}
	\\
	&\leq \dfrac{1}{n} \sum_{i = 1}^{\hat n} \norm{\hat{x}_i - \hat{x}^{\mathcal{C}_f}_i} + \dfrac{1}{n} \sum_{i = \hat n + 1}^{n} \norm{\hat{x}_i - \hat{x}^{\mathcal{C}_f}_i}
	\\
	&\leq \dfrac{\hat n}{n}\delta(1 + \sqrt n) + \dfrac{n - \hat n}{n}\delta/2.
  \end{align*}
  It follows that $\frac{n}{1 + 2\sqrt n} \leq \hat n$, and thus $\lim_{n} \hat n = \infty$. However, since
  \begin{equation*}
	\sup_{x_{1:n} \in Y_n} \prod_{i = 1}^n f(x_i) \leq b_n^{\hat n} \leq b_*^{\hat n},
  \end{equation*}
  and $b_* < 1$, it follows that $\lim_{n} \sup_{x_{1:n} \in Y_n} \prod_{i = 1}^n f(x_i) = 0$.
  
  Any sequence in $\bar{Y}_n$ has at least one point in $\mathcal{X} \setminus (B_n \cup B_0)$, and thus
  \begin{equation*}
	\sup_{x_{1:n} \in \bar{Y}_n} \prod_{i = 1}^n f(x_i) \leq \bar{b}_n.
  \end{equation*}
  Since $\lim_{n} \bar{b}_n = 0$, it follows that $\lim_{n} \sup_{x_{1:n} \in \bar{Y}_n} \prod_{i = 1}^n f(x_i) = 0$.
  
  We can then write
  \begin{align*}
	\lim_{n \to \infty} f_{\uvs_n}(y) & = \lim_{n \to \infty} \sup_{x_{1:n} \in Y_n \cup \bar{Y}_n} \prod_{i = 1}^n f(x_i) \\
& = \lim_{n \to \infty} \max\bigg\{ \sup_{x_{1:n} \in Y_n} \prod_{i = 1}^n f(x_i), \sup_{x_{1:n} \in \bar{Y}_n} \prod_{i = 1}^n f(x_i) \bigg\},
  \end{align*}
  and since for two convergent sequences $\{u_n\}_n$ and $\{v_n\}_n$ it holds that
  \begin{equation*}
  \lim_{n\to\infty} \max\{u_n,v_n\} = \max\Big\{\lim_{n\to\infty} u_n, \lim_{n\to\infty} v_n\Big\},
  \end{equation*}
  it follows that $\lim_{n} f_{\uvs_n}(y) = 0$.
\end{description}
\end{proof}

\begin{proof}[Proof of Theorem~\ref{thm:CLT}]
One of the basic properties of strictly log-concave functions is that they are maximised at a single point which we denote by $\mu$. We assume without loss of generality that $\mu = 0$ and We write $f$ instead of $f_{\uvx}$ for the sake of simplicity. To find the supremum of $\prod_{i=1}^n f(x_i)$ over the set of $x_i$'s verifying $n^{-1/2}\sum_{i=1}^n x_i = x$, we first use Lagrange multipliers to find that $$f'(x_i)f(x_j) = f'(x_j)f(x_i),$$ for any $i,j \in \{1,\dots,n\}$ so that a solution is $x_i = n^{-1/2}x$. In order to show that this solution is local maximizer, we consider the bordered Hessian corresponding to our constrained optimisation problem, defined as
$$
H =
\begin{bmatrix}
0          & 1/\sqrt{n} &            & \dots  &   & 1/\sqrt{n} \\
1/\sqrt{n} & a          & b          & \dots  &   & b \\
           & b          &            &        &   &  \\
\vdots     & \vdots     &            & \ddots &   &  \vdots \\
           &            &            &        &   & b \\
1/\sqrt{n} & b          &            & \dots  & b & a 
\end{bmatrix},
$$
where $a = f''(y)f(y)^{n-1}$ and $b = f'(y)^2f(y)^{n-2}$ with $y = x/\sqrt{n}$. For the solution $x_i = y$, $i \in \{1,\dots,n\}$, to be a local maximum, the sign of the principal minors $M_3, \dots, M_n$ of $H$ has to be alternating, starting with $M_3$ positive. Basic matrix manipulations for the determinant yield
\begin{equation}
\label{eq:minors}
M_k = -\dfrac{k-1}{n}(a - b)^{k-2},
\end{equation}
which is alternating in sign. For $M_3$ to be positive, it has to hold that
$$
f''(y)f(y) < f'(y)^2.
$$
This condition can be recognized as a necessary and sufficient condition for a function to be strictly log-concave. It also follows from the assumption of log-concavity that the condition $f'(x_i)f(x_j) = f'(x_j)f(x_i)$, which can be expressed as $(\log f(x_i))' = (\log f(x_j))'$ can only be satisfied at $x_i = x_j$ so that this solution is a global maximum. We therefore study the behaviour of the function $f(\frac{x}{\sqrt{n}})^n$ as $n \to \infty$ and obtain
$$
f\bigg(\dfrac{x}{\sqrt{n}}\bigg)^n = \exp\bigg( f'(0) \sqrt{n}x + \dfrac{1}{2} \big(f''(0) - f'(0)^2\big)x^2 + O\big(n^{-1/2}\big) \bigg).
$$
The result of the proposition follows easily by taking the limit and by noting that $f'(0) = 0$ and that $f''(0)$ is non-positive since $f$ decreases in the neighbourhood of its $\argmax$.
\end{proof}

\section{Additional results and proofs for Section~\ref{sec:MLE_un}}
\label{sec:additionalConcepts}

We begin this section with a useful result regarding the evolution of the variance when transforming uncertain variables.

\begin{prop}
\label{prop:mapVariance}
Let $\uvx$ be an uncertain variable on a set $\mathcal{X}$ with expected value $\mu = \mathbb{E}^*(\uvx)$ described by the possibility function $f_{\uvx}$, let $\xi : \mathcal{X} \to \mathcal{Z}$ be a bijective function in another set $\mathcal{Z}$ such that both $\xi$ and $\xi^{-1}$ are twice differentiable and let $\uvz$ be the uncertain variable $\xi(\uvx)$, then $\uvz$ is described by $f_{\uvx} \circ \xi^{-1}$ and the variance of $\uvz$ can be expressed as
\begin{equation*}
\mathbb{V}^*(\uvz) = (\partial_x \xi(\mu))^2 \mathbb{V}^*(\uvx).
\end{equation*}
\end{prop}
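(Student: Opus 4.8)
The plan is to combine the change of variable formula \eqref{eq:changeOfVariable}, the invariance property of the expectation, and a direct two-line computation with the chain rule. First I would pin down the description of $\uvz$: since $\xi$ is bijective, the inverse image $\xi^{-1}[\{z\}]$ appearing in \eqref{eq:changeOfVariable} is the singleton $\{\xi^{-1}(z)\}$ for every $z \in \mathcal{Z}$, so the supremum is over one point and $f_{\uvz} = f_{\uvx} \circ \xi^{-1}$. Next, the invariance property recorded after Definition~\ref{def:expectation}, namely $\mathbb{E}^*(T(\uvx)) = T(\mathbb{E}^*(\uvx))$, gives $\mathbb{E}^*(\uvz) = \xi(\mu)$; in particular this is again a singleton, so $\mathbb{V}^*(\uvz)$ is well defined through \eqref{eq:variance}.

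The heart of the argument is then to differentiate $f_{\uvz} = f_{\uvx} \circ \xi^{-1}$ twice and evaluate at $\xi(\mu)$. Writing $g = \xi^{-1}$, the chain rule gives
$$
f_{\uvz}''(z) = f_{\uvx}''\big(g(z)\big)\, g'(z)^2 + f_{\uvx}'\big(g(z)\big)\, g''(z).
$$
Evaluating at $z = \xi(\mu)$ we have $g(\xi(\mu)) = \mu$, and since $\mu = \mathbb{E}^*(\uvx)$ coincides with the mode of $f_{\uvx}$ (the identity $\argmax f_{\uvx} = \mathbb{E}^*(\uvx)$ noted after Definition~\ref{def:expectation}) and $f_{\uvx}$ is differentiable there, $f_{\uvx}'(\mu) = 0$, so the second term vanishes. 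Differentiating the identity $\xi(g(z)) = z$ yields $g'(\xi(\mu)) = 1/\xi'(\mu) = 1/\partial_x\xi(\mu)$, which is well defined and nonzero because the differentiability of $\xi^{-1}$ forces $\xi'$ to be nowhere vanishing. Substituting gives $f_{\uvz}''(\xi(\mu)) = f_{\uvx}''(\mu)\,(\partial_x\xi(\mu))^{-2}$, and plugging this into \eqref{eq:variance},
$$
\mathbb{V}^*(\uvz) = -f_{\uvz}''\big(\xi(\mu)\big)^{-1} = (\partial_x\xi(\mu))^2\,\big(-f_{\uvx}''(\mu)^{-1}\big) = (\partial_x\xi(\mu))^2\,\mathbb{V}^*(\uvx).
$$

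I do not expect a genuine obstacle here; the computation is routine. The one point meriting care is the vanishing of $f_{\uvx}'(\mu)$, which is what makes the formula clean and which rests on identifying the expected value with the mode of $f_{\uvx}$. The degenerate cases are covered by the same identity with the conventions of \eqref{eq:variance}: if $f_{\uvx}''(\mu) = 0$ then $\mathbb{V}^*(\uvx) = \infty$ and likewise $f_{\uvz}''(\xi(\mu)) = 0$ so $\mathbb{V}^*(\uvz) = \infty$, consistent with the stated relation; the one-sided case $\mathbb{V}^*(\uvx) = 0$ transfers identically since $\partial_x\xi(\mu) \neq 0$ keeps the left and right derivatives of $f_{\uvz}$ at $\xi(\mu)$ nonzero.
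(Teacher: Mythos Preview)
Your proposal is correct and follows essentially the same approach as the paper: apply the change-of-variable formula to get $f_{\uvz}=f_{\uvx}\circ\xi^{-1}$, differentiate twice via the chain rule, use $\mathbb{E}^*(\uvz)=\xi(\mu)$ together with $f_{\uvx}'(\mu)=0$ to kill the cross term, and invoke the inverse-function derivative identity to rewrite $(\xi^{-1})'(\xi(\mu))$ as $1/\partial_x\xi(\mu)$. Your treatment is in fact slightly more explicit than the paper's, which glosses over the vanishing of $f_{\uvx}'(\mu)$ and does not discuss the degenerate cases.
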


\begin{proof}[Proof of Proposition~\ref{prop:mapVariance}]
Since $\xi$ is bijective, the possibility function $f_{\uvz}$ describing $\uvz$ is indeed
$$
f_{\uvz}(z) = \sup \big\{ f_{\uvx}(x) : x \in \mathcal{X}, \xi(x) = z \big\} = (f_{\uvx} \circ \xi^{-1})(z),
$$
for any $z \in \mathcal{Z}$. The variance of $\uvz$ is 
\begin{equation*}
\mathbb{V}^*(\uvz) = -\dfrac{1}{\partial_z^2 f_{\uvz}(\mathbb{E}^*(\uvz))}.
\end{equation*}
The second derivative in the denominator can be computed as
\begin{equation*}
\partial_z^2 f_{\uvz}(z) = \partial_z^2 \xi^{-1}(z) \partial_x f_{\uvx}(\xi^{-1}(z)) + (\partial_z \xi^{-1}(z))^2 \partial_x^2 f_{\uvx}(\xi^{-1}(z)).
\end{equation*}
Using the fact that $\mathbb{E}^*(\uvz) = \xi(\mu)$ as well as the standard rules for the derivative of the inverse of a function, it follows that
\begin{equation*}
\partial_z^2 f_{\uvz}(\mathbb{E}^*(\uvz)) = \dfrac{1}{(\partial_x \xi(\mu))^2} \partial_x^2 f_{\uvx}(\mu).
\end{equation*}
The result of the lemma follows from the identification of the term $-1/\partial_x^2 f_{\uvx}(\mu)$ as the variance of $\uvx$.
\end{proof}

\subsection{Proofs of the results in Section~\ref{sec:MLE_un}}

Some additional technical concepts and results are needed to state the proofs of the results in Section~\ref{sec:MLE_un}. We say that the sequence $(\uvx_1, \uvx_2, \dots)$ converges in \emph{credibility} to an uncertain variable $\uvx$ in $\mathcal{X}$ if for all $\delta > 0$
\begin{equation}
\label{eq:convCredibility}
\lim_{n \to \infty} \bar{\mathbb{P}} \big( |\uvx_n - \uvx| > \delta \big) = 0.
\end{equation}
This is denoted $\uvx_n \xrightarrow{c.} \uvx$. The LLN for uncertain variables can be proved to give a convergence in credibility to the expected value. Using these two concepts of convergence, we state without proving the analogue of Slutsky's lemma as follows.

\begin{prop}
\label{prop:slutsky}
Let $(\uvx_1, \uvx_2, \dots)$ and $(\uvz_1,\uvz_2,\dots)$ be two sequences of uncertain variables. If it holds that $\uvx_n \xrightarrow{o.p.m.} \uvx$ and $\uvz_n \xrightarrow{c.} \alpha$ for some uncertain variable $\uvx$ and some constant $\alpha$ then
\begin{equation*}
\uvx_n + \uvz_n \xrightarrow{o.p.m.} \uvx + \alpha, \quad \uvx_n\uvz_n \xrightarrow{o.p.m.} \alpha\uvx,\quad \text{and} \quad \uvx_n/\uvz_n \xrightarrow{o.p.m.} \uvx/\alpha,
\end{equation*}
given that $\alpha$ is invertible.
\end{prop}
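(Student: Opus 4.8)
The plan is to treat the three claims uniformly via the change-of-variable formula \eqref{eq:multScalar}: writing $g$ for one of the maps $(x,z)\mapsto x+z$, $(x,z)\mapsto xz$, or $(x,z)\mapsto x/z$, the uncertain variable $g(\uvx_n,\uvz_n)$ is described by $f_{g(\uvx_n,\uvz_n)}(w)=\sup\{f_{\uvx_n,\uvz_n}(x,z):g(x,z)=w\}$, while the claimed limits $\uvx+\alpha$, $\alpha\uvx$, $\uvx/\alpha$ are described by $w\mapsto f_{\uvx}(h(w))$ with $h(w)=w-\alpha$, $w/\alpha$, $\alpha w$ respectively (so $\alpha\neq 0$ is used for the latter two). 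By \eqref{eq:convOpm} it then suffices, for each fixed $w$, to sandwich $f_{g(\uvx_n,\uvz_n)}(w)$ between quantities tending to $f_{\uvx}(h(w))$. Two elementary facts will be used throughout: $\uvz_n\xrightarrow{c.}\alpha$ says precisely that $\bar{\mathbb{P}}(|\uvz_n-\alpha|>\delta)=\sup_{|z-\alpha|>\delta}f_{\uvz_n}(z)\to 0$ for every $\delta>0$; and consequently any near-maximiser $z_n^{*}$ of $f_{\uvz_n}$ (one with $f_{\uvz_n}(z_n^{*})\geq 1-1/n$) satisfies $z_n^{*}\to\alpha$, since otherwise a subsequence would keep $\bar{\mathbb{P}}(|\uvz_n-\alpha|>\delta)$ bounded away from $0$.

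For the upper bound, fix a small $\delta>0$ (with $\delta<|\alpha|$ for the product and quotient) and split the supremum over $z$ into the regions $|z-\alpha|>\delta$ and $|z-\alpha|\leq\delta$. On the first region $f_{\uvx_n,\uvz_n}(x,z)\leq f_{\uvz_n}(z)\leq\bar{\mathbb{P}}(|\uvz_n-\alpha|>\delta)\to 0$. On the second region $f_{\uvx_n,\uvz_n}(x,z)\leq f_{\uvx_n}(x)$, where $x$ is confined to a compact interval around $h(w)$ that shrinks to $\{h(w)\}$ as $\delta\to 0$; upgrading the pointwise convergence $f_{\uvx_n}\to f_{\uvx}$ to locally uniform convergence (valid under the standing regularity, in particular whenever the $f_{\uvx_n}$ are log-concave and $f_{\uvx}$ continuous, as in Theorem~\ref{thm:LLN} and in Section~\ref{sec:MLE_un}) bounds the $\limsup_n$ of this part by $\sup_{|x-h(w)|\leq c(\delta)}f_{\uvx}(x)$. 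Letting $\delta\to 0$ and using continuity of $f_{\uvx}$ yields $\limsup_n f_{g(\uvx_n,\uvz_n)}(w)\leq f_{\uvx}(h(w))$.

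For the lower bound, take a near-maximiser $z_n^{*}\to\alpha$ of $f_{\uvz_n}$ and the unique $x_n$ with $g(x_n,z_n^{*})=w$ (namely $x_n=w-z_n^{*}$, $w/z_n^{*}$, or $wz_n^{*}$, all converging to $h(w)$); then $f_{g(\uvx_n,\uvz_n)}(w)\geq f_{\uvx_n,\uvz_n}(x_n,z_n^{*})$. Under the independent description of $\uvx_n$ and $\uvz_n$ that is in force in the applications (Theorems~\ref{thm:asympNormUnknownSamp} and~\ref{thm:asymptotic_LRT_unknown}), this equals $f_{\uvx_n}(x_n)\,f_{\uvz_n}(z_n^{*})\to f_{\uvx}(h(w))\cdot 1$, again by local uniform convergence of $f_{\uvx_n}$. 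Combining the two bounds gives $f_{g(\uvx_n,\uvz_n)}(w)\to f_{\uvx}(h(w))$ for every $w$, i.e.\ $g(\uvx_n,\uvz_n)\xrightarrow{o.p.m.}g(\uvx,\alpha)$.

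I expect the lower bound to be the main obstacle, for two reasons. First, the pointwise convergence of the marginals must be promoted to locally uniform convergence; this is automatic for log-concave possibility functions (their negative logarithms are convex, and pointwise convergence of convex functions to a finite continuous limit is locally uniform) but must be justified under whatever regularity is taken as standing. Second, and more delicately, the joint possibility function $f_{\uvx_n,\uvz_n}$ must be controlled from below near the section $z=\alpha$: this is trivial under independent description, but in general one can construct marginally-convergent couplings whose joint possibility functions do not converge, so some joint hypothesis is genuinely unavoidable — consistent with the proposition being stated without proof. The remaining points are routine: $\alpha\neq 0$ is needed for the quotient (hence the invertibility hypothesis) and for the product, and the $\alpha=0$ case of the product would additionally require tightness of $\{\uvx_n\}$ in credibility to handle the region where $x=w/z$ is large.
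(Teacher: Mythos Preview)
The paper does not prove this proposition: immediately before stating it the authors write ``we state without proving the analogue of Slutsky's lemma'', and immediately after they add ``the proof of Slutsky's lemma for uncertain variables is beyond the scope of this work.'' There is therefore no argument in the paper to compare your proposal against.

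On its own merits your sketch is the natural route and is essentially sound, and you have correctly isolated the two genuine obstructions. The sandwich argument via the change-of-variable supremum \eqref{eq:multScalar} is exactly what one would do; the split of the supremum over $|z-\alpha|\gtrless\delta$ handles the upper bound, and plugging in a near-maximiser $z_n^*\to\alpha$ handles the lower bound. Your two caveats are precisely the missing hypotheses in the proposition as stated: (i) pointwise convergence of $f_{\uvx_n}$ must be upgraded to locally uniform convergence (automatic under log-concavity via convexity of $-\log f_{\uvx_n}$, as you note), and (ii) the lower bound genuinely requires control of the joint possibility function near $z=\alpha$, which independent description supplies but which is not assumed in the statement. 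Your observation that both ingredients are available in the only places the proposition is used (Theorems~\ref{thm:asympNormUnknownSamp} and~\ref{thm:asymptotic_LRT_unknown}) is the right way to justify the applications. One small addition: your final step in the upper bound, letting $\delta\to 0$ in $\sup_{|x-h(w)|\leq c(\delta)} f_{\uvx}(x)$, also uses continuity of $f_{\uvx}$ at $h(w)$, which should be listed explicitly among the standing regularity assumptions.
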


We note that the proof of Slutsky's lemma for uncertain variables is beyond the scope of this work.

\begin{proof}[Proof of Theorem~\ref{res:BvM}]
Following the standard approach, we introduce the possibility function describing $\uvpsi = \sqrt{n}(\uvtheta - \theta_0)$ as
\begin{align*}
f_{\uvpsi}(\psi \given  y_1,\dots, y_n) & = \sup \bigg\{ \dfrac{L_n(\theta)f_{\uvtheta}(\theta)}{\sup_{\theta' \in \Theta} L_n(\theta')f_{\uvtheta}(\theta')} : \theta \in \Theta, \sqrt{n}(\theta - \theta_0) = \psi \bigg\} \\
& = \dfrac{L_n(\theta_0 + \psi/\sqrt{n})f_{\uvtheta}(\theta_0 + \psi/\sqrt{n})}{\sup_{\psi' \in \Theta} L_n(\theta_0 + \psi'/\sqrt{n})f_{\uvtheta}(\theta_0 + \psi'/\sqrt{n})}.
\end{align*}
Again, as usual, we consider large values of $n$ within the argument of the prior only. Under Assumption~\ref{it:priorDiff}, noticing that the argument of the supremum in the denominator is maximized at the MLE leads to
$$
f_{\uvpsi}(\psi \given y_{1:n}) \approx \dfrac{L_n(\theta_0 + \psi/\sqrt{n})}{L_n(\hat{\theta}_n)}.
$$
where the MLE $\hat{\theta}_n$ is deterministic since the observations are given. As opposed to the corresponding proof in a probabilistic context, the MLE appears naturally in the approximation of $f_{\uvtheta}(\cdot \given y_{1:n})$, this is due to the supremum in the denominator of Bayes' rule for possibility functions. This difference suggests that the BvM theorem is indeed very natural with possibility functions. In order to relate $\hat{\theta}_n$ to the true parameter $\theta_0$, we expand the derivative of the log-likelihood $\ell_n$ at $\theta_0$ around $\hat{\theta}_n$ as
\begin{equation*}
\partial_{\theta} \ell_n(\theta_0) = \partial_{\theta} \ell_n(\hat{\theta}_n) + (\theta_0 - \hat{\theta}_n) \partial_{\theta}^2 \ell_n(\hat{\theta}_n ) + \dfrac{1}{2}(\theta_0 - \hat{\theta}_n)^2 \partial_{\theta}^3 \ell_n(\psi_n ),
\end{equation*}
for some $\psi_n$ in the interval formed by $\hat{\theta}_n$ and $\theta_0$. Since it holds that $\partial_{\theta} \ell_n(\hat{\theta}_n) = 0$ by construction, it follows that
\begin{equation}
\label{eq:distTrueMLE}
\sqrt{n}(\hat{\theta}_n - \theta_0) = \dfrac{\frac{1}{\sqrt{n}} \partial_{\theta} \ell_n(\theta_0 )}{-\frac{1}{n}\partial_{\theta}^2 \ell_n(\hat{\theta}_n) + \frac{1}{2n}(\hat{\theta}_n - \theta_0)\partial_{\theta}^3 \ell_n(\psi_n)}.
\end{equation}
The term $-n^{-1}\partial_{\theta}^2 \ell_n(\hat{\theta}_n)$ is the observed information which we denote $\mathcal{J}^*_n$. It follows from Assumption~\ref{it:identifiable} that the second term in the denominator of \eqref{eq:distTrueMLE} vanishes for $n$ large. The MLE therefore verifies
\begin{equation}
\label{eq:MleAsFunctionOfDelta}
\sqrt{n}(\hat{\theta}_n - \theta_0) = \Delta_n \doteq \sqrt{n}\dfrac{\partial_{\theta} \ell_n(\theta_0)}{\mathcal{J}^*_n}.
\end{equation}
which yields
$$
f_{\uvpsi}(\psi \given y_{1:n}) \approx \dfrac{L_n(\theta_0 + \psi/\sqrt{n})}{L_n(\theta_0 + \Delta_n/\sqrt{n})} = \exp\big( \ell_n(\theta_0 + \psi/\sqrt{n}) - \ell_n(\theta_0 + \Delta_n/\sqrt{n}) \big).
$$
The expression of the posterior possibility function $f_{\uvpsi}(\cdot \given  y_{1:n})$ can be further simplified by expanding the log-likelihood $\ell_n$ around $\hat{\theta}_n$ as
\begin{equation*}
\ell_n(\theta) = \ell_n(\hat{\theta}_n) + \dfrac{1}{2} (\theta - \hat{\theta}_n)^2 \partial_{\theta}^2 \ell_n(\hat{\theta}_n) + \dfrac{1}{6} (\theta - \hat{\theta}_n)^3 \partial_{\theta}^3 \ell_n(\psi'_n),
\end{equation*}
with $\psi'_n$ also lying between $\theta$ and $\theta_0$ and by considering $\theta = \theta_0 + \psi/\sqrt{n}$ and $\theta = \theta_0 + \Delta_n/\sqrt{n}$. Using Assumption~\ref{it:identifiable} once more, we deduce the following approximation
\begin{equation}
\label{eq:posteriorPsi}
f_{\uvpsi}(\psi \given y_{1:n}) \approx \exp\bigg( -\dfrac{\mathcal{J}^*_n}{2n}(\psi - \Delta_n)^2 \bigg).
\end{equation}
Returning to the possibility function describing $\uvtheta$ a posteriori and using the relation \eqref{eq:MleAsFunctionOfDelta} once more, leads to the desired result.
\end{proof}

\begin{proof}[Proof of Theorem~\ref{thm:asympNormUnknownSamp}]
We make explicit the dependency of $\Delta_n$ and $\mathcal{J}^*_n$ on the observation by writing $\Delta_n(\uvy_{1:n})$ and $\mathcal{J}^*_n(\uvy_{1:n})$ instead. It follows from Theorem~\ref{res:BvM} and from Assumption~\ref{it:bounded2} that
$$
\sqrt{n}(\uvtheta^*_n - \theta_0) = \dfrac{n}{\mathcal{J}^*_n(\uvy_{1:n})} \times \dfrac{1}{\sqrt{n}} \sum_{i=1}^n \partial_{\theta} \ell(\theta_0; \uvy_i).
$$
The LLN for uncertain variables, together with Assumption~\ref{it:identifiable}, yields
\begin{equation}
\label{eq:convergenceObservedInfo}
\dfrac{1}{n}\mathcal{J}^*_n(\uvy_{1:n}) = \dfrac{1}{n} \sum_{i=1}^n \partial_{\theta}^2 \ell(\hat{\uvtheta}_n; \uvy_i) \xrightarrow{o.p.m.} \mathbb{E}^*\big( \partial_{\theta}^2 \ell(\theta_0; \uvy) \big) = \mathcal{I}^*(\theta_0),
\end{equation}
and the CLT for uncertain variables yields
\begin{equation}
\label{eq:convergenceScore}
\dfrac{1}{\sqrt{n}} \sum_{i=1}^n \partial_{\theta} \ell(\theta_0; \uvy_i) \xrightarrow{o.p.m.} \overline{\mathrm{N}}\big(0, \mathbb{V}^*(s_{\theta_0}(\uvy)) \big).
\end{equation}
The desired result follows from Slutsky's lemma for uncertain variables (Proposition~\ref{prop:slutsky}) and from the properties of normal uncertain variables.
\end{proof}

\begin{proof}[Proof of Theorem~\ref{thm:asymptotic_LRT_unknown}]
It follows from Theorem~\ref{res:BvM} and from Assumption~\ref{it:bounded2} that
\begin{equation*}
- 2 \log \lambda(\uvy_{1:n}) \approx \dfrac{1}{n}\mathcal{J}^*(\uvy_{1:n}) \Delta_n(\uvy_{1:n})^2 .
\end{equation*}
Expanding this expression based on the definition \eqref{eq:MleAsFunctionOfDelta} of $\Delta_n(\uvy_{1:n})$, we find that
\begin{equation*}
- 2 \log \lambda(\uvy_{1:n}) \approx \dfrac{n}{\mathcal{J}^*(\uvy_{1:n})} \bigg( \dfrac{1}{\sqrt{n}} \sum_{i=1}^n \partial_{\theta} \ell(\theta_0; \uvy_i) \bigg)^2.
\end{equation*}
The desired result follows from the convergence results \eqref{eq:convergenceObservedInfo} and \eqref{eq:convergenceScore} together with Slutsky's lemma for uncertain variables as well as with the definition and properties of the (non-central) chi-squared possibility function.
\end{proof}

\end{document}